\documentclass[a4paper]{article}
\usepackage{hyperref}
\usepackage{amsmath}
\usepackage[dvips]{graphicx}

\usepackage{amsthm}
\usepackage{epsfig}
\usepackage{amssymb}
\usepackage{dsfont}
\usepackage{float}
\usepackage{color}

\newtheorem{theorem}{Theorem}[section]
\newtheorem{lemma}[theorem]{Lemma}

\newtheorem{rem}{Remark}[section]
\newtheorem{de}{Definition}[section]

\begin{document}

\begin{center}
{\Large  Stability of periodic peakons for the Novikov equation}
\end{center}
\vskip 5mm

\begin{center}
{\sc Yun Wang and Lixin Tian} \\
\vskip2mm
Institute of Mathematics,
School of Mathematical Sciences,\\
Nanjing Normal University,
Nanjing, 210023, China
\vskip 5mm
\end{center}

\vskip 5mm {\leftskip5mm\rightskip5mm \normalsize
\noindent{\bf{Abstract}} 
The Novikov equation is an integrable Camassa-Holm type equation with cubic nonlinearity and admits 
the periodic peakons. In this paper, it is shown that the periodic peakons are the global periodic weak 
solutions to the Novikov equation and we also prove the orbital stability of the periodic peakons for
Novikov equation. By using the invariants of the equation and controlling the extrema of the solution,
it is demonstrated that the shapes of these periodic peakons are stable under small perturbations in the 
energy space. \\
\par
\noindent{\bf{Keywords}}: Novikov equation, Periodic peakons, Weak solution?Orbital stability, 
Conservation Laws

\par
{\bf{MSC2010}}: 35B30, 35G25}

\newtheorem{proposition}[theorem]{Proposition}

\renewcommand{\theequation}{\thesection.\arabic{equation}}
\catcode`@=11
\@addtoreset{equation}{section}
\catcode`@=12

\section{Introduction}

We consider here the orbital stability of the periodic peakons for the
Novikov equation in the form
\begin{equation}\label{NV}
   u_{t}-u_{xxt}+4u^{2}u_{x}-3uu_{x}u_{xx}-u^{2}u_{xxx}=0, \quad x\in \mathbb{R},\quad t>0,
\end{equation}
for the function $u(x,t)$ of a single spatial variable $x$ and time $t$. Equation (\ref{NV})
was derived by Novikov $\cite{GNV}$ in a symmetry classification of nonlocal partial
differential equations with cubic nonlinearity. By defining a new dependent variable $y$,
equation (\ref{NV}) can be written as a compact form
\begin{equation}\label{CNV}
    y_{t}+u^2y_{x}+\frac{3}{2}(u^2)_{x}y=0, \quad y=u-u_{xx},
\end{equation}
which can be regarded as a generalization for the celebrated Camassa-Holm (CH) equation
$\cite{SWP}$ or the Degasperis-Procesi (DP) equation $\cite{DP}$.

The celebrated Camassa-Holm (CH) equation
\begin{equation}\label{CH}
    y_{t}+uy_{x}+2u_{x}y=0, \quad y=u-u_{xx},
\end{equation}
was proposed as a model for the unidirectional propagation of the shallow water waves
over a flat bottom\cite{AD, SWP, CHR}, with $u(x,t)$ representing the water's
free surface in nondimentional variables \cite{SWP}. It can be found using the method of recursion
operators as an example of bi-Hamiltonian equation with an infinite number of conserved
functionals by Fokas and Fuchssteiner \cite{BF}. The CH equation has attracted much attention
in the last two decades because of its interesting properties: complete integrability
\cite{BF, SWP}, existence of peaked solitons and multi-peakons \cite{SWP}, geometric
formulations and the presence of breaking waves(i.e. a wave profile remains bounded while its
slope becomes unbounded in finite time). Among these properties, a remarkable one is that it
admits the single peakons and periodic peakons in the following forms
\begin{equation}\label{SP}
    \varphi_{c}(x,t)=ce^{-|x-ct|}, \quad c\in \mathbb{ R},
\end{equation}
and
\begin{equation}\label{PP}
    u_{c}(x,t)=\frac{c}{\operatorname{sh}(\frac{1}{2})}\operatorname{ch}(\frac{1}{2}-(x-ct)+[x-ct]), 
    \quad c\in \mathbb{ R},
\end{equation}
here the notation $[x]$ denotes the largest integer part of the real number $x\in\mathbb{R}$,
and the multi-peakon solutions
\begin{equation}\label{MP}
    u(x,t)=\sum^{N}_{i=1}p_{i}(t)e^{-|x-q_{i}(t)|},
\end{equation}
where $p_{i}(t)$ and $q_{i}(t)$ satisfy the Hamiltonian system
$$
\left \{
   \begin{array}{l}
      \dot{q_{i}}=\displaystyle\sum\limits_{j}p_{j}e^{-|q_{i}-q_{j}|}=\frac{\partial H}{\partial p_{i}},\\
      \dot{p_{i}}=\displaystyle\sum\limits_{j\neq i}p_{i}p_{j}sign(q_{i}-q_{j})e^{-|q_{i}-q_{j}|}=
      -\frac{\partial H}{\partial q_{i}},
   \end{array}
   \right.
$$
with the Hamiltonian
$$
H=\frac{1}{2}\sum^{N}_{i,j=1,2}p_{i}p_{j}e^{-|q_{i}-q_{j}|}.
$$
The solutions consisting of a train of infinite many peaked solitary waves were
established in $\cite{DDCS}$. Note that the peakons are smooth solutions of (\ref{CH})
except at the peak points $x=q_{i}(t)$, where the derivative of $u$ is discontinuous.
The interest in peakons is great because they are relatively new forms of solitary
waves (for most models the solitary waves are quite smooth). More importantly, in the
theory of water waves many papers have investigated the Stokes waves of greatest height,
traveling waves which are smooth everywhere except at the crest where the lateral
tangents differ. There is no closed form available for these waves and the peakons
capture the essential features of the extreme waves (shown in\cite{A}). To validate
the peakons as physically relevant solutions of the CH or DP models, it does not
suffice to show their existence (readily obtained in explicit form).
It is necessary to show that these wave patterns can
be detected, namely, their shape is stable under perturbations (orbital stability).
In an intriguing paper by Constantin and Strauss\cite{AL1}, it was shown that the
single peakons for the CH equation are orbitally stable. The approach of proof is to
use the energy as a Lyapunov functional and derive stability in view of the way that
certain conservation laws interact. A variational approach for proving the orbital
stability of the peaked solitons was introduced by Constantin and Molient\cite{AL}.
Orbital stability of multi-peakon solutions was proved by Dika and Molient\cite{DL}
using the approaches in \cite{AL1}. Stability of the periodic peaked solitons for the
CH equation was proved by Lenells\cite{LJ}. The approach in\cite{LJ}was extended
in \cite{JR} to prove the orbital stability of the periodic peaked solitons $u_{c}(x,t)=c
\varphi(x-ct)$ with $\varphi(x)=\displaystyle\frac{1}{26}(12x^2+23)$, \ $\displaystyle 
x\in[-1/2,1/2]$ for the $\mu$-CH equation
$$
y_{t}+uy_{x}+2u_{x}y=0, \quad y=\mu(u)-u_{xx},
$$
where $\mu(u)=\displaystyle \int_{S^1}u(x,t)dx$, with $S^1=\mathbb{R}/\mathbb{Z}$
and $\varphi$ is extended periodically to the real line.

The DP equation
$$
y_{t}+uy_{x}+3yu_{x}=0,\quad y=u-u_{xx},
$$
was obtained in the study of asymptotical integrability for a family of three-order
nonlinear dispersive evolution equations\cite{DP}. It can also be derived as a member
of a one-parameter family of asymptotic shallow water wave approximations to the Euler
equations with the same asymptotic accuracy as that of the CH equation\cite{AD}.
Similarly to the CH equation, the DP equation also has the peaked solitons\cite{ADD},
and is integrable with the Lax pair and bi-Hamiltonian structure\cite{DP}.
It is noticed that the Lax pair for the DP equation and the CH equation are the
$3\times3$ and $2\times2$ spectral problems, respectively. The orbital stability of the
single peakons for the DP equation was proved by Lin and Liu\cite{ZWL}. They developed
the approach due to Constantin and Strauss\cite{AL1} in a delicate way.

It is observed that all nonlinear terms in the CH and DP equations are quadratic. In
contract to the integrable modified KdV equation with a cubic nonlinearity, it is of great
interest to find integrable CH-type equations with cubic or higher-order nonlinearity
admitting peaked solitons. Up to now, to the best of our knowledge, two integrable CH-type
equations with cubic nonlinearities have been discovered. One was introduced by Olver
and Rosenau\cite{PJP} by using the tri-Hamiltonian duality approach, which is the so-called
modified CH equation in the form
$$
y_{t}+[(u^2-u^2_{x})y]_{x}=0, \quad y=u-u_{xx},
$$
admits the single peakon \cite{GGY}
$$
\varphi_{c}(x,t)=\sqrt{\frac{3c}{2}}e^{-|x-ct|},\quad c>0
$$
and the periodic peakon
$$
u_{c}(x,t)=\sqrt{\frac{3c}{1+2\operatorname{ch}^2(\frac{1}{2})}}\operatorname{ch}(\frac{1}{2}
-(x-ct)+[x-ct]),\quad c>0.
$$
The issue of the stability of peakons for the modified CH equation is under investigation\cite{Qu}.
The other one is the Novikov equation (\ref{NV}). Both equations were shown to have some new 
properties which are different from that for the CH and DP equations\cite{FT, GGY}. 
This paper will only focus on the Novikov equation.

It is known that the Novikov equation (\ref{NV}) is integrable with the Lax pair\cite{GNV}
$$
\begin{array}{ll}
\displaystyle\psi_{xxx}=\psi_{x}+\lambda y^2\psi+2\frac{y_{x}}{y}\psi_{xx}+\frac{yy_{xx}
-2y^2_{x}}{y^2}\psi_{x},\\[3mm]
\displaystyle\psi_{t}=\frac{u}{\lambda y}\psi_{xx}-\frac{yu_{x}+uy_{x}}{y^2}\psi_{x}-u^2\psi_{x}.
\end{array}
$$
A matrix Lax pair representation to the Novikov equation was recently provided by Hone and Wang\cite{WAN}.
With that representation it can be shown that the Novikov equation is related to a negative flow in the
Sawada-Kotera hierarchy. It is also noticed that the Novikov equation admits a bi-Hamiltonian structure
\cite{WAN} and it can be written as
$$
y_{t}=\mathcal{B}_{1}\frac{\delta H_{1}}{\delta y}=\mathcal{B}_{2}\frac{\delta H_{2}}{\delta y}
$$
with the Hamiltonian operators
$$
\begin{array}{ll}
\mathcal{B}_{1}=-2(3y\partial_{x}+2y_{x})(4\partial_{x}-\partial_{x}^3)^{-1}(3y\partial_{x}+y_{x}),\\[3mm]
\mathcal{B}_{2}=(1-\partial_{x}^2)\frac{1}{y}\partial_{x}\frac{1}{y}(1-\partial_{x}^2),
\end{array}
$$
and the corresponding Hamiltonians
$$
\begin{array}{ll}
H_{1}=\displaystyle \int(u^2+u^2_{x})dx,\\[3mm]
H_{2}=\displaystyle\frac{1}{6} \int uy\partial^{-1}y(\partial^{2}-1)^{-1}(u^2y_{x}+3uu_{x}y)dx.
\end{array}
$$

An interesting property of the Novikov equation (\ref{NV}), which is common with the CH and DP
equations, is the existence of the peaked solitons, which is given explicitly in the form
\begin{equation}\label{ss}
u(x,t)=\varphi_{c}(x-ct)=\sqrt{c}\varphi(x-ct),
\end{equation}
traveling at constant speed $c>0$, where $\varphi(x)=e^{-|x|}$. Note that $e^{-|x|}/2$ is the convolution
kernel for the operator $(1-\partial^2_{x})^{-1}$. Hone, Lundmark and Szmigielski \cite{AHJ} obtained
multi-peakons of the Novikov equation explicitly by using the inverse scattering approach.
The peaked solitons (\ref{ss}) are not classical solutions but weak solutions of (\ref{NV}) 
in the conservation law form
\begin{equation}\label{1.8}
u_{t}+u^2u_{x}+p*(3uu_{x}u_{xx}+2u^3_{x}+3u^2u_{x})=0, \quad x\in \mathbb{R},\quad t>0,
\end{equation}
where $*$ stands for convolution with respect to the spatial variable $x\in \mathbb{R}$ and $p=\frac{1}{2}e^{-|x|}$.
The orbital stability of peakons on the line was shown in\cite{XLIU}.
In this paper, we only consider the stability of the following periodic peakons
\begin{equation}\label{1.9}
u(x,t)= \varphi_{c}(x,t)=\sqrt{c}\varphi(x-ct),
\end{equation}
where $\varphi(x)$ is given by
$$
\varphi(x)=\frac{\operatorname{ch}(\frac{1}{2}-x+[x])}{\operatorname{ch}(\frac{1}{2})},
$$
with  $\operatorname{ch}(x)=\displaystyle\frac{e^x+e^{-x}}{2}$. The main result is as follows.
\begin{theorem}
The periodic peakons (\ref{1.9}) of the equation (\ref{NV}) are orbitally stable in the energy space.
\end{theorem}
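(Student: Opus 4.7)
The plan is to adapt the Constantin--Strauss energy-method framework to the periodic cubic setting, combining Lenells' treatment of periodic peakons for the Camassa--Holm equation \cite{LJ} with the arguments of \cite{XLIU} for Novikov peakons on the line. The two pillars of the proof will be a pair of low-order conservation laws and a sharp polynomial inequality that controls the global maximum of an $H^{1}$ function on the circle in terms of those conserved quantities.

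First I would identify and verify the two invariants
\begin{equation*}
E(u) = \int_{0}^{1}(u^{2}+u_{x}^{2})\,dx,\qquad F(u)=\int_{0}^{1}\Bigl(u^{4}+2u^{2}u_{x}^{2}-\tfrac{1}{3}u_{x}^{4}\Bigr)\,dx,
\end{equation*}
the first being the Hamiltonian $H_{1}$ already recorded in the introduction and the second a higher-order conserved functional natural in the Novikov hierarchy. Conservation along smooth periodic solutions follows from multiplying (\ref{CNV}) by the appropriate test quantities and integrating by parts on $S^{1}$; the identity transfers to the peakon regime via an approximation argument using the weak formulation (\ref{1.8}). From the explicit formula for $\varphi$ I would then derive the crucial pointwise relation
\begin{equation*}
\varphi_{c}^{2}(x)-\varphi_{c,x}^{2}(x)=c\,\operatorname{sech}^{2}(\tfrac{1}{2}),
\end{equation*}
valid on each smooth arc of the periodic peakon, and record $M_{c}:=\max_{x}\varphi_{c}=\sqrt{c}$. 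Substituting back, $E(\varphi_{c})$ and $F(\varphi_{c})$ become explicit polynomials in $M_{c}$ with coefficients built from $\sinh(\tfrac{1}{2})$ and $\cosh(\tfrac{1}{2})$.

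The core of the argument is a sharp inequality on the circle. Given $u\in H^{1}(S^{1})$, let $M=\max_{x}u(x)$ be attained at some point $\xi$. Following Lenells~\cite{LJ}, I would introduce an auxiliary function of the form
\begin{equation*}
g(x) = u^{2}(x)-u_{x}^{2}(x)-M^{2}\,\operatorname{sech}^{2}(\tfrac{1}{2}),
\end{equation*}
together with companion quantities built from $u\pm u_{x}$, designed to vanish precisely when $u$ is a translate of $\varphi_{c}$ of height $M$. By splitting the circle at $\xi$ and $\xi+\tfrac{1}{2}$, exploiting the monotonicity of $u$ on each resulting arc and combining the pointwise identity above with the defining integrals of $E$ and $F$, I would derive an inequality of the shape
\begin{equation*}
F(u)\le a\,M^{2}E(u)-b\,M^{4},
\end{equation*}
with explicit constants $a,b$ depending only on $\cosh(\tfrac{1}{2})$, and with equality exactly when $u$ is a translate of $\varphi_{c}$ of height $M$. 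This is the step I expect to be the main obstacle: the \emph{quartic} nature of $F$ (in contrast with the cubic second invariant exploited for CH) forces a significantly more intricate sign analysis than in \cite{LJ}, and the periodic geometry produces boundary contributions at $\xi$ and $\xi+1$ which have to cancel against the $\sinh/\cosh$ weights coming from the interior integrals.

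The stability conclusion then follows in a standard manner. Since $E$ and $F$ are preserved along the flow, if $\|u_{0}-\varphi_{c}\|_{H^{1}(S^{1})}$ is small then $E(u(\cdot,t))$ and $F(u(\cdot,t))$ remain close to $E(\varphi_{c})$ and $F(\varphi_{c})$ for all $t>0$, and the sharpness of the inequality above forces $|M(u(\cdot,t))-\sqrt{c}|$ to stay small. An elementary lemma in the spirit of \cite{AL1,LJ} then shows that any periodic $H^{1}$ function with $E$ close to $E(\varphi_{c})$ and maximum close to $M_{c}=\sqrt{c}$ must be $H^{1}$-close to some translate of $\varphi_{c}$, which is precisely the orbital stability of the peakon orbit $\{\varphi_{c}(\cdot-s,t):s\in\mathbb{R}\}$ claimed in the theorem.
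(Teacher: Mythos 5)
Your overall architecture --- the two invariants $E_{2}$ and $E_{3}$, a sharp inequality tying them to the extrema of $u$ with equality at the peakon, and the reduction ``energy close $+$ height close $\Rightarrow$ $H^{1}$-close to a translate'' --- coincides with the paper's (the last step is exactly Lemma \ref{lem3.1}). The gap is in the central inequality. You propose a bound $F(u)\le a\,M^{2}E(u)-b\,M^{4}$ in the maximum $M$ alone, to be proved by splitting the circle at $\xi$ and $\xi+\tfrac12$ and using monotonicity of $u$ on the two arcs. That is the structure of the line case \cite{XLIU}, where decay at infinity supplies a point with $u=0$; it does not survive periodization. A general perturbation need not be monotone on the arcs, its minimum need not sit at $\xi+\tfrac12$, and the correct periodic inequality is not polynomial: the paper's Lemma \ref{lem3.2} proves $G_{u}(M_{u},m_{u})\ge 0$ for a function of \emph{both} the maximum and the minimum containing the transcendental term $2m^{2}\ln\bigl((M+\sqrt{M^{2}-m^{2}})/m\bigr)$. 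The mechanism there is to split at the maximum point $\xi$ and the actual minimum point $\eta$ and to use the auxiliary functions $g=u_{x}\pm\sqrt{u^{2}-m^{2}}$ and $h=u^{2}\pm\tfrac23 u_{x}\sqrt{u^{2}-m^{2}}-\tfrac13 u_{x}^{2}-m^{2}$ with $m=\min u$: only then is $\sqrt{u^{2}-m^{2}}$ real everywhere and zero at $\eta$, which is what makes the boundary terms of $\int \frac{d}{dx}\bigl(u(u^{2}-m^{2})^{3/2}\bigr)dx$ computable and removes any need for monotonicity. Freezing the peakon's ratio $m=M/\operatorname{ch}(\tfrac12)$ into your auxiliary quantities forfeits both properties, since a small $H^{1}$ perturbation can push $\min u$ below $M/\operatorname{ch}(\tfrac12)$.

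The second, consequent gap is in how the inequality is supposed to trap the height. With two free extrema you cannot conclude from a single scalar inequality in $M$ that $M_{u(t)}$ stays near $\sqrt{c}$; the paper instead shows (Lemma \ref{lem3.3}) that $G_{\varphi_{c}}$ vanishes at $(M_{\varphi_{c}},m_{\varphi_{c}})$ with vanishing gradient and negative definite Hessian, so that the region where $G_{u}\ge 0$ near that point shrinks to it as $E_{2}(u)\to E_{2}(\varphi_{c})$ and $E_{3}(u)\to E_{3}(\varphi_{c})$, and then uses the continuity of $t\mapsto(M_{u(t)},m_{u(t)})$ (Lemma \ref{lem3.5}) to keep the pair trapped for all time. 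To repair your proof you must carry the minimum as an independent second variable through both the derivation of the inequality and the trapping argument.
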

The following are three useful conservation laws of the Novikov equation\cite{WAN}
$$
\begin{array}{ll}
E_{1}(u)=\displaystyle \int(u-u_{xx})^\frac{2}{3}dx, \ \quad E_{2}(u)=\displaystyle \int(u^2+u_{x}^2)dx,\\[3mm]
E_{3}(u)=\displaystyle \int(u^4+2u^2u^2_{x}-\frac{1}{3}u_{x}^4)dx,
\end{array}
$$
which will play a key role in proving the orbital stability of the periodic peaked solutions, while the corresponding
three conservation laws of the CH equation are in the following:
$$
F_{1}(u)=\displaystyle \int(u-u_{xx})dx,\quad F_{2}(u)=E_{2}(u), \quad F_{3}(u)=\displaystyle 
\int(u^3+uu_{x}^2)dx.
$$
Due to the conservation law $E_{2}$, we can expect the orbital stability of periodic peakons for the Novikov 
equation.  It is found that the conservation law $E_{3}$ of the Novikov equation is much more complicated than 
$F_{3}$ of the CH equation. Therefore, the stability discussion of the periodic peakons for the Novikov equation 
is more difficult.  Our approach is inspired by \cite{Qu} where the stability of peakons(on the line or periodic) for 
the $m$-CH equation was studied and by \cite{LJ} where the stability of periodic peakons for the CH equation was 
also studied. The proof of stability is expored by finding appropriate inequalities related to the two conservation laws 
$E_{2}$ and $E_{3}$ with global maximum and minimum of the solution.  To establish these inequalities in the present 
case one requires to make use of two special ingredients: i) the conserved equalities $E_{2}$ and $E_{3}$, ii) two 
functionals with global maximum and minimum of the solution, which are connected to the two conservation laws 
$E_{2}$ and $E_{3}$. On the other hand, the two functionals are required to vanish at the  periodic peakons $\varphi_{c}$. 
The observation is crucial since the periodic peakons, if stable, must be critical points of the energy functional with 
momentum constraint, and should satisfy the corresponding Euler-Lagrangian equations.

$Notation.$  Throughout the paper, the norm of a Banach space $Z$ is denoted by $\|\cdot\|_{Z}$. 
In the periodic case, we denote $\mathbb{S}=\mathbb{R}/\mathbb{Z}$ as the unit circle and 
regard the function on $\mathbb{S}$ as a periodic function on the entire line with period one. Given $T>0$, let 
$C^{\infty}_{c}(\mathbb{S}\times[0,T))$ denote the space of all smooth functions with compact support on $\mathbb{S}\times[0,T)$, 
which can  be obtained as the restriction to $\mathbb{S}\times[0,T)$ of smooth functions on $\mathbb{S}\times\mathbb{R}$ 
with compact support contained in $\mathbb{R}\times[0,T)$. 
Let $*$ denote convolution with respect to the spatial variable $x\in\mathbb{S}$. 
For any Banach space $Z$ and real number $T>0$, $C([0,T),Z)$ 
is the class  of continuous functions from $[0,T)$ to $Z$.
For an integer $n\geq1$, we let $H^n(\mathbb{S})$ denote the Sobolev space of all square integrable functions $f\in L^2(\mathbb{S})$ 
with distributional derivatives $\partial^i_{x}f\in L^2(\mathbb{S})$  for $i=1,2, ..., n. $ The norm on $H^n(\mathbb{S})$ is given by 
 $$
 \|f\|^2_{H^n(\mathbb{S})}=\sum^n_{i=0}\int_{\mathbb{S}}(\partial^i_{x}f)^2(x)dx.
 $$

\section{Preliminaries}

In this section,  we consider the Cauchy problem for the Novikov equation on the unit circle:
\begin{equation}\label{2.1}
\left \{
   \begin{array}{l}
   y_{t}+u^2y_{x} +3uu_{x}y=0, \quad y=u-u_{xx},\quad t>0, \quad x\in \mathbb{S},\\
   u(0,x)=u_{0}(x),\quad x\in \mathbb{S}.
   \end{array}
   \right.
\end{equation}
Since all space of functions are over $\mathbb{S}$, for simplicity, we drop $\mathbb{S}$ in our notations 
of function spaces if there is no ambiguity. First, we give the notion of strong solutions as follows.
\begin{de}\label{de2.1}
If $u\in C([0, T); H^s(\mathbb{S}))\cap C^1([0, T); H^{s-1}(\mathbb{S}))$ with $s>\frac{3}{2}$ and some $T>0$ 
satisfies (\ref{2.1}),  then $u$ is  called a strong solution on $[0,T)$.  If $u$ is a strong solution on $[0,T)$ for 
every $T>0$, then it is called a global strong solution.
\end{de}
The following local well-posedness result of the Cauchy problem(\ref{2.1}) for strong solutions
on the unit circle  with initial data $u_{0}\in H^s$, $s>3/2$, can be obtained by applying a 
Galerkin-type approximation method which is established by Himonas and Holliman in \cite{AC}.
\begin{lemma}(Local well-posedness)
If $s>3/2$ and $u_{0}\in H^s(\mathbb{S})$, then there exists $T>0$ such that the Cauchy problem(\ref{2.1}) has 
a unique strong solution  $u\in C([0, T); H^s(\mathbb{S}))\cap C^1([0, T); H^{s-1}(\mathbb{S}))$ for the periodic or
non-periodic. Furthermore, the map $u_{0}\rightarrow u$ is continuous from a neighborhood of $u_{0}$ in $H^s
(\mathbb{S})$ into $C([0, T); H^s(\mathbb{S}))\cap C^1([0, T); H^{s-1}(\mathbb{S}))$.
\end{lemma}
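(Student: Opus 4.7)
The plan is to follow the Galerkin-type scheme of Himonas and Holliman cited in the paper. First I would rewrite the Cauchy problem in the nonlocal form obtained by applying $(1-\partial_x^2)^{-1}$ to (\ref{2.1}), namely
\begin{equation*}
u_t + u^2 u_x + p \ast \bigl(3 u u_x u_{xx} + 2 u_x^3 + 3 u^2 u_x\bigr) = 0, \qquad u(0) = u_0,
\end{equation*}
where $p$ now denotes the periodic Green's function of $1-\partial_x^2$ on $\mathbb{S}$. The point is that convolution with $p$ gains two spatial derivatives, so every term on the left-hand side loses at most one derivative when acting on $u \in H^s$, which is the right structure for an energy method.

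Next I would introduce the spectral projection $P_n$ onto trigonometric polynomials of degree at most $n$ and consider the truncated problem
\begin{equation*}
\partial_t u^n + P_n \bigl( (u^n)^2 \partial_x u^n \bigr) + P_n\bigl( p \ast F(u^n) \bigr) = 0, \qquad u^n(0) = P_n u_0,
\end{equation*}
on the finite-dimensional space $V_n = P_n L^2(\mathbb{S})$, where $F(u) = 3 u u_x u_{xx} + 2 u_x^3 + 3 u^2 u_x$. Since all Sobolev norms are equivalent on $V_n$, the right-hand side is locally Lipschitz and Picard-Lindel\"of produces a unique solution $u^n \in C^1([0, T_n); V_n)$.

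The main obstacle is to prove that $T_n$ admits a lower bound and $\|u^n(t)\|_{H^s}$ a uniform upper bound, both independent of $n$. For this I would apply $\Lambda^s = (1-\partial_x^2)^{s/2}$ to the truncated equation and pair with $\Lambda^s u^n$ in $L^2$, using that $P_n$ is self-adjoint and commutes with $\Lambda^s$. The delicate contribution $\langle \Lambda^s (u^2 u_x), \Lambda^s u\rangle$ I would split via a Kato-Ponce commutator,
\begin{equation*}
\langle \Lambda^s(u^2 u_x), \Lambda^s u\rangle = \langle u^2 \partial_x \Lambda^s u, \Lambda^s u\rangle + \langle [\Lambda^s, u^2]\, u_x, \Lambda^s u\rangle.
\end{equation*}
Integration by parts reduces the first term to $-\tfrac{1}{2}\langle (u^2)_x, (\Lambda^s u)^2\rangle$, bounded by $\|(u^2)_x\|_{L^\infty}\|u\|_{H^s}^2 \lesssim \|u\|_{H^s}^3$ via the embedding $H^s \hookrightarrow W^{1,\infty}$ for $s > 3/2$; the commutator is controlled by the Kato-Ponce inequality together with the algebra property of $H^s$, giving again $\lesssim \|u\|_{H^s}^3$. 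The nonlocal terms $\langle \Lambda^s(p \ast F(u^n)), \Lambda^s u^n\rangle$ are easier, since the two-derivative smoothing of $p \ast$ absorbs the $u_{xx}$ inside $F$ and reduces everything to $H^s$ algebra estimates. Summing the contributions yields
\begin{equation*}
\frac{d}{dt}\|u^n(t)\|_{H^s}^2 \le C\,\|u^n(t)\|_{H^s}^3,
\end{equation*}
from which a standard Gronwall comparison furnishes the desired uniform bound on some $[0,T]$ with $T = T(\|u_0\|_{H^s})$ independent of $n$.

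With the uniform estimate in hand I would use the evolution equation to bound $\partial_t u^n$ in $H^{s-1}$, extract via an Aubin-Lions argument a subsequence converging in $C([0,T]; H^{s'})$ for any $s' < s$ and weak-$*$ in $L^\infty(0,T;H^s)$, and verify that the limit $u$ satisfies the nonlocal equation. Uniqueness follows from an $L^2$ estimate on the difference $w = u - v$ of two solutions, which gives $\tfrac{d}{dt}\|w\|_{L^2}^2 \lesssim (\|u\|_{H^s} + \|v\|_{H^s})^2\|w\|_{L^2}^2$ and hence $w \equiv 0$ by Gronwall. Upgrading weak to strong continuity $u \in C([0,T); H^s)$ and establishing continuous dependence would proceed by a Bona-Smith type argument: mollify the initial data, use uniform $H^s$ bounds on the regularizations together with the $L^2$ stability estimate and interpolation. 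The membership $u \in C^1([0,T); H^{s-1})$ is then immediate by reading it off the evolution equation itself, since the right-hand side lies in $H^{s-1}$ whenever $u \in H^s$ with $s > 3/2$.
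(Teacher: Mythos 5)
The paper offers no proof of this lemma; it simply cites the Galerkin-type approximation method of Himonas and Holliman, and your sketch is precisely that method (Galerkin truncation, $\Lambda^s$ energy estimate with a Kato--Ponce commutator, Aubin--Lions compactness, $L^2$ uniqueness, Bona--Smith for continuity and continuous dependence), so it is essentially the same approach and correct in outline. The only slip is the power in the energy inequality: since the nonlinearity is cubic, the pairing is quartic in $u$, so $\|(u^2)_x\|_{L^\infty}\|u\|_{H^s}^2\lesssim\|u\|_{H^s}^4$ and the closed estimate is $\frac{d}{dt}\|u^n\|_{H^s}^2\le C\|u^n\|_{H^s}^4$, which still yields a lifespan $T\gtrsim\|u_0\|_{H^s}^{-2}$ independent of $n$ by the same Gronwall comparison.
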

In the periodic case and for $s>5/2$, the above lemma was proved by Ti$\check{g}$lay using Arnold's geometric 
framework \cite{FT}.  Under the assumptions of $s\geq3$, Ti$\check{g}$lay also proved the following global existence 
and properties of the strong solution.
\begin{lemma}
Let $u_{0}\in H^s(\mathbb{S})$ with $s\geq3$. Assume that $y_{0}=(1-\partial^2_{x})u_{0}\geq0$, then (\ref{2.1}) 
has a unique global strong solution
$$
u\in C([0, \infty); H^s(\mathbb{S}))\cap C^1([0, \infty); H^{s-1}(\mathbb{S}))
$$
with the initial data $u_{0}$. 
\end{lemma}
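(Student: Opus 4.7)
The plan is to combine the local well-posedness result (Lemma~2) with a standard blow-up criterion and an a priori $W^{1,\infty}$ bound made available by the sign hypothesis $y_0 \geq 0$. By Lemma~2 there is a maximal time $T^{*}>0$ and a unique strong solution $u\in C([0,T^{*});H^s)\cap C^1([0,T^{*});H^{s-1})$ with $u(0)=u_0$. A standard blow-up criterion for Novikov-type equations (obtained by $H^s$-energy estimates on (\ref{2.1})) yields that if $T^{*}<\infty$ then $\int_0^{T^{*}}\|u_x(\tau)\|_{L^\infty}\,d\tau=\infty$. The task therefore reduces to producing a uniform bound for $\|u_x(t)\|_{L^\infty}$ on every finite subinterval of $[0,T^{*})$.

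The first ingredient is the propagation of the sign of $y$ along characteristics. I would introduce the flow $q(t,x)$ solving $\partial_t q=u^2(t,q)$ with $q(0,x)=x$, which is well defined and smooth in $x$ as long as $u\in C^1$. Using $y_t+u^2 y_x=-3uu_x y$ from (\ref{CNV}) together with $\partial_t q_x=2uu_x(t,q)\,q_x$, a direct computation gives the conservation law
\[
\frac{d}{dt}\Bigl[y(t,q(t,x))\,q_x(t,x)^{3/2}\Bigr]=0,\qquad\text{so}\qquad y(t,q(t,x))\,q_x(t,x)^{3/2}=y_0(x).
\]
Since $q_x>0$ on $[0,T^{*})$, the sign of $y$ is preserved, and the assumption $y_0\geq 0$ forces $y(t,\cdot)\geq 0$ for every $t\in[0,T^{*})$.

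The second ingredient is the pointwise comparison $|u_x|\leq u$ on $\mathbb{S}$. Inverting $1-\partial_x^2$ on $\mathbb{S}$ gives $u=p*y$, where
\[
p(x)=\frac{\operatorname{ch}\!\bigl(\tfrac12-x+[x]\bigr)}{2\operatorname{sh}(\tfrac12)}
\]
is the positive periodic Green's function, and $|p'(x)|\leq p(x)$ since $|\operatorname{sh}(\tfrac12-x+[x])|\leq \operatorname{ch}(\tfrac12-x+[x])$. Combined with $y\geq 0$ this yields
\[
0\leq u(t,x)=(p*y)(t,x),\qquad |u_x(t,x)|=|(p'*y)(t,x)|\leq (p*y)(t,x)=u(t,x).
\]
Using the conservation law $E_2(u)=\int_{\mathbb{S}}(u^2+u_x^2)\,dx=E_2(u_0)$ together with the Sobolev embedding $H^1(\mathbb{S})\hookrightarrow L^\infty(\mathbb{S})$, I obtain $\|u(t)\|_{L^\infty}\leq C\|u_0\|_{H^1}$, and hence $\|u_x(t)\|_{L^\infty}\leq \|u(t)\|_{L^\infty}\leq C\|u_0\|_{H^1}$ on $[0,T^{*})$. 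This contradicts the blow-up criterion unless $T^{*}=+\infty$, giving the claimed global solution; uniqueness is inherited from Lemma~2.

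The main obstacle I expect is the rigorous justification of the two identities above: the transport identity $y\,q_x^{3/2}\equiv y_0$ requires careful use of the regularity $u\in C^1([0,T^{*});H^{s-1})$ with $s\geq 3$ so that $u^2$ and $uu_x$ are Lipschitz in $x$ and the characteristics form a diffeomorphism of $\mathbb{S}$, while the bound $|u_x|\leq u$ requires handling the distributional derivative $p'$ across its jumps at the integers, which is harmless for the convolution against the continuous density $y\geq 0$ but deserves explicit verification.
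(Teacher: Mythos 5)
Your proof is correct. The paper itself offers no argument for this lemma --- it is quoted from Ti\u{g}lay \cite{FT} (see also \cite{WY}) --- and your proposal reproduces exactly the standard proof from those references: propagation of the sign of $y$ via the conserved quantity $y(t,q(t,x))\,q_x(t,x)^{3/2}=y_0(x)$ along the characteristics $q_t=u^2(t,q)$, the pointwise bound $|u_x|\leq u$ from the positivity of the periodic Green's function and $|G'|\leq G$, and the uniform $L^\infty$ bound from the conservation of $E_2$ fed into a blow-up criterion. The only imprecision is the exact form of the blow-up criterion: for the cubic nonlinearity the $H^s$ energy estimate naturally gives $\frac{d}{dt}\|u\|_{H^s}^2\leq C(\|u\|_{L^\infty}^2+\|u_x\|_{L^\infty}^2)\|u\|_{H^s}^2$, so the criterion involves $\int_0^{T^*}\|u_x\|_{L^\infty}^2\,d\tau$ rather than the first power, but since you derive a uniform pointwise bound on $u_x$ this does not affect the conclusion.
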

\begin{rem}(\cite{WY})
Let $u_{0}\in H^s(\mathbb{S})$ with $s\geq3$ and $y(t, \cdot)=u(t, \cdot)-\partial^2_{x}u(t, \cdot)$. Assume that 
$y_{0}=(1-\partial^2_{x})u_{0}\geq0$,  then for all $t>0$, $y(t, \cdot)\geq0$, $u(t, \cdot)>0$ and $|u_{x}(t, \cdot)|
\leq u(t, \cdot)$ on the unit circle.
\end{rem}
\begin{lemma}(\cite{FT})
The functionals $E_{2}$ and $E_{3}$ are conserved for the global strong solution $u$, that is for all $t\in [0, \infty)$
$$
\frac{d}{dt}\int_{\mathbb{S}}(u^2+u^2_{x})dx=0 \  and \  \frac{d}{dt}\int_{\mathbb{S}}\left(u^4+2u^2u^2_{x}-\frac{1}{3}
u^4_{x}\right)dx=0.
$$
Substituting the formula for $y$ in terms of $u$ into equation (\ref{CNV}) leads to the following equation
$$
u_{t}+u^2u_{x}+(1-\partial^2_{x})^{-1}\partial_{x}\left(u^3+\frac{3}{2}uu^2_{x}\right)+(1-\partial^2_{x})^{-1}
\left(\frac{u^3_{x}}{2}\right)=0.
$$
Recall that 
$$
u=(1-\partial^2_{x})^{-1}y=G*y,
$$
where $G(x)=\frac{\operatorname{ch}(\frac{1}{2}-x)}{2\operatorname{sh}(1/2)}$ is the Green function 
of the operator $(1-\partial^2_
{x})^{-1}$ for the periodic case, and $*$ denotes the convolution product on $\mathbb{S}$, defined by
$$
(f*g)(x)=\displaystyle\int_{\mathbb{S}}f(y)g(x-y)dy.
$$
\end{lemma}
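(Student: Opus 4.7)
The plan is to differentiate each functional in time, substitute the equation in (\ref{2.1}) for $y_t$ (or, for $E_3$, an equivalent nonlocal form for $u_t$), and cancel all resulting terms by periodic integration by parts. For $E_2$ this is a one-line computation: integrating by parts in $x$ (boundary terms vanish on $\mathbb{S}$) gives
$$
\tfrac{1}{2}\tfrac{d}{dt}E_2 \;=\; \int_{\mathbb{S}}(u u_t + u_x u_{xt})\,dx \;=\; \int_{\mathbb{S}} u(u_t - u_{xxt})\,dx \;=\; \int_{\mathbb{S}} u\,y_t\,dx.
$$
Substituting $y_t = -u^2 y_x - 3 u u_x y$ and using $\int u^3 y_x\,dx = -3\int u^2 u_x y\,dx$ (one further IBP), the two contributions cancel exactly, so $\tfrac{d}{dt} E_2 = 0$.

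For $E_3$ the first step is to compute the variational derivative by IBP:
$$
\tfrac{\delta E_3}{\delta u} \;=\; 4u^3+4u u_x^2 - \partial_x\!\bigl(4u^2 u_x - \tfrac{4}{3}u_x^3\bigr) \;=\; 4(u^2-u_x^2)(u-u_{xx}) \;=\; 4(u^2-u_x^2)\,y,
$$
so $\tfrac{1}{4}\tfrac{d}{dt} E_3 = \int_{\mathbb{S}}(u^2-u_x^2)\,y\,u_t\,dx$. The nonlocal form of the evolution equation stated in the lemma,
$$
u_t \;=\; -u^2 u_x - (1-\partial_x^2)^{-1}\bigl[\partial_x(u^3+\tfrac{3}{2}u u_x^2) + \tfrac{1}{2}u_x^3\bigr],
$$
is a one-line verification: applying $(1-\partial_x^2)$ to both sides, expanding $\partial_x^2(u^2 u_x) = 2u_x^3 + 6u u_x u_{xx} + u^2 u_{xxx}$, and matching coefficients recovers (\ref{NV}) exactly. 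Substituting this expression splits $\tfrac{1}{4}\tfrac{d}{dt} E_3$ into a local part
$$
I_1 \;:=\; -\int_{\mathbb{S}}(u^2-u_x^2)\,y\,u^2 u_x\,dx
$$
and a nonlocal part $I_2$ carrying $(1-\partial_x^2)^{-1}$.

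The local piece $I_1$ is evaluated by expanding $(u^2-u_x^2)(u-u_{xx}) = u^3 - u^2 u_{xx} - u u_x^2 + u_x^2 u_{xx}$ and reducing each of the four resulting monomials by one IBP; every perfect $x$-derivative integrates out, leaving $I_1 = -\int u^3 u_x^3\,dx + \tfrac{1}{2}\int u u_x^5\,dx$. For $I_2$, use the self-adjointness of $(1-\partial_x^2)^{-1}$ on $\mathbb{S}$ to transfer this operator onto the factor $(u^2-u_x^2)y$, introducing the auxiliary $\psi := (1-\partial_x^2)^{-1}[(u^2-u_x^2)y]$; pair $\psi$ and $\psi_x$ against the local densities $u^3+\tfrac{3}{2}u u_x^2$ and $\tfrac{1}{2}u_x^3$, then iterate IBP together with the defining relation $\psi - \psi_{xx} = (u^2-u_x^2)y$ until only local monomials in $u, u_x, u_{xx}$ remain. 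The result is precisely $-I_1$, and the two pieces cancel. The main obstacle here is bookkeeping rather than any conceptual barrier: roughly a dozen monomials have to be tracked consistently through the IBP chain. The cancellation is forced a priori by the bi-Hamiltonian structure of the Novikov equation \cite{WAN}, which places $E_3$ inside the involutive conserved hierarchy of the flow, so the direct calculation above is the explicit verification of the Poisson-bracket identity $\{E_3, H_1\} = 0$.
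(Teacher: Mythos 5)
The paper does not actually prove this lemma: the conservation of $E_{2}$ and $E_{3}$ is quoted from Ti\u{g}lay \cite{FT}, and the nonlocal reformulation and Green's function are recorded as known facts. Your direct verification is therefore a genuinely different, self-contained route, and most of it checks out. The $E_{2}$ computation is complete and correct. The derivation of the nonlocal form is correct: applying $(1-\partial_{x}^{2})$ and using $\partial_{x}^{2}(u^{2}u_{x})=2u_{x}^{3}+6uu_{x}u_{xx}+u^{2}u_{xxx}$ does recover (\ref{NV}), with the $u_{x}^{3}$ coefficients summing to $-2+\tfrac{3}{2}+\tfrac{1}{2}=0$. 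The identity $\tfrac{\delta E_{3}}{\delta u}=4(u^{2}-u_{x}^{2})y$ is also correct, and your local piece evaluates as stated: $I_{1}=-\int u^{3}u_{x}^{3}\,dx+\tfrac{1}{2}\int uu_{x}^{5}\,dx$.

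The one place where the argument is asserted rather than carried out is the decisive step, $I_{2}=-I_{1}$. Everything before it is routine; this cancellation is where the entire content of the $E_{3}$ conservation lives, and "iterate IBP with $\psi-\psi_{xx}=(u^{2}-u_{x}^{2})y$ until only local monomials remain" does not by itself explain why the nonlocal terms close up into local ones. To make this step checkable you should exhibit the localization explicitly, e.g. via the identity
\begin{equation*}
(u^{2}-u_{x}^{2})y=(1-\partial_{x}^{2})\left(\tfrac{1}{3}u^{3}\right)+\tfrac{2}{3}u^{3}+uu_{x}^{2}+\tfrac{1}{3}\partial_{x}(u_{x}^{3}),
\end{equation*}
which gives $\psi=\tfrac{1}{3}u^{3}+(1-\partial_{x}^{2})^{-1}\bigl[\tfrac{2}{3}u^{3}+uu_{x}^{2}+\tfrac{1}{3}\partial_{x}(u_{x}^{3})\bigr]$ and lets the remaining nonlocal pairings be matched against the densities $u^{3}+\tfrac{3}{2}uu_{x}^{2}$ and $\tfrac{1}{2}u_{x}^{3}$ by symmetry of $(1-\partial_{x}^{2})^{-1}$. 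The appeal to the bi-Hamiltonian structure of \cite{WAN} correctly explains why the cancellation must occur, but it is an a posteriori consistency check, not a substitute for the computation. Since the result is true and your strategy is the standard one, this is a gap of execution rather than of ideas; filling it in (or simply citing \cite{FT}, as the paper does) completes the proof.
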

In terms of this formulation, one can define the following periodic weak solutions.

\begin{de}\label{2.2}
Given initial data $u_{0}\in W^{1,3}(\mathbb{S})$, the function $u\in L^{\infty}_{loc}([0, T), W^{1,3}_{loc}
(\mathbb{S}))$ is called a periodic weak solution to the initial value problem (\ref{2.1}) if it satisfies the 
following identity:
\begin{equation}
\begin{array}{ll}
\displaystyle\int^T_{0}\int_{\mathbb{S}}\left[u\partial_{t}\phi+\frac{1}{3}u^3\partial_{x}\phi+G(x)*\left(u^3
+\frac{3}{2}uu^2_{x}\right)\partial_{x}\phi-G(x)*\left(\frac{u^3_{x}}{2}\right)\phi\right]dxdt\\[3mm]
\quad\quad\quad\quad\quad \quad\quad\quad\quad\quad\quad+\displaystyle\int_{\mathbb{S}}
u_{0}(x)\phi(0,x)dx=0,
\end{array}
\end{equation}
for any smooth test function $\phi(t, x)\in C^{\infty}_{c}\left([0, T)\times \mathbb{S}\right)$. If $u$ is a weak 
solution on $[0, T)$ for every $T>0$, then it is called a global periodic weak solution.
\end{de}
\begin{rem}
It is inferred from the Sobolev embedding $W^{1,3}_{loc}(\mathbb{S}) \hookrightarrow C^\alpha(\mathbb{S})$ 
with $0\leq\alpha\leq\frac{2}{3}$ that Definition \ref{2.2} precludes the admissibility of discontinuous shock 
waves as weak solutions.
\end{rem}
In \cite{WY}, the authors proved that there exists a unique global weak solution to Eq.(\ref{NV}).
Note that the periodic peakons to the Novikov equation are not strong solutions but weak solutions, 
the proof is in next section. 

\section{Periodic Peaked solutions}
In this section, it is verified that the periodic peakons (\ref{1.9}) to the Novikov equation (\ref{2.1}) are the global
periodic weak solution. Recall that the existence of periodic peakons for both the CH  and $\mu$-CH equations 
and the amplitudes are proportional to speed $c\in R$.
\begin{theorem}\label{thp3.1}
The periodic peaked functions of the form
\begin{equation}\label{p3.1}
u(x, t)=a\operatorname{ch}(\zeta),\ \zeta=\frac{1}{2}-(x-ct)+[x-ct].
\end{equation}
with $a^2=\frac{c}{\operatorname{ch}^2(1/2)}$are global periodic weak solutions to equation (\ref{NV}) in the sense of 
Definition \ref{2.2}.
\end{theorem}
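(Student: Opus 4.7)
The plan is to substitute the peaked profile $u(x,t)=a\operatorname{ch}(\zeta)$ directly into the weak formulation of Definition \ref{2.2} and check that the resulting identity holds for every test function $\phi$. Since $u$ is Lipschitz in $(x,t)$ and a traveling wave with $u_t=-cu_x$ almost everywhere, one can integrate $\int u\,\partial_t\phi$ by parts in $t$ (the resulting initial data boundary term cancels the explicit one in the definition), and integrate $\int\tfrac{1}{3}u^3\partial_x\phi$ and $\int G*(\cdots)\,\partial_x\phi$ by parts in $x$. Substituting $u_t=-cu_x$, Definition \ref{2.2} becomes
\[
\int_0^T\!\!\int_{\mathbb{S}}\Bigl[(c-u^2)u_x-\partial_x\bigl(G*(u^3+\tfrac{3}{2}uu_x^2)\bigr)-G*\bigl(\tfrac{1}{2}u_x^3\bigr)\Bigr]\phi\,dx\,dt=0
\]
for every $\phi$. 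Because $\phi$ is arbitrary, the proof reduces to the spatial identity
\[
\partial_x\bigl(G*(u^3+\tfrac{3}{2}uu_x^2)\bigr)+G*\bigl(\tfrac{1}{2}u_x^3\bigr)=(c-u^2)u_x\quad\text{on }\mathbb{S},
\]
to be verified at each fixed time; by translation invariance in $x$ it suffices to check it at $t=0$.

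To make this tractable I would exploit the algebraic structure of the profile. Away from the peak, $u_{xx}=u$ classically, and since $u=a\operatorname{ch}\zeta$ and $u_x=-a\operatorname{sh}\zeta$ the identity $u^2-u_x^2=a^2$ holds pointwise. Substituting $u_x^2=u^2-a^2$ simplifies the nonlinearities to
\[
u^3+\tfrac{3}{2}uu_x^2=\tfrac{5}{2}u^3-\tfrac{3}{2}a^2 u,\qquad \tfrac{1}{2}u_x^3=\tfrac{1}{6}\partial_x(u^3)-\tfrac{a^2}{2}u_x.
\]
By linearity and by commuting $\partial_x$ with the convolution $G*\,\cdot\,$, the spatial identity collapses to the much cleaner
\[
\tfrac{8}{3}\,\partial_x(G*u^3)-2a^2\,\partial_x(G*u)=(c-u^2)u_x.
\]

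The final step is an explicit computation. A crucial observation is that $u(\cdot,0)=2a\operatorname{sh}(\tfrac{1}{2})\,G$ on $\mathbb{S}$, so the two convolutions reduce to evaluating $G*G$ and $G*G^3$. Splitting the integration over $\mathbb{S}$ at the peak $\eta=0$ and at the evaluation point $\eta=\xi$, the resulting integrals are products of hyperbolic cosines and sines that admit elementary antiderivatives. Differentiating these closed forms and substituting $a^2\operatorname{ch}^2(\tfrac{1}{2})=c$ together with the standard $\operatorname{ch}$/$\operatorname{sh}$ addition formulas should produce $(c-u^2)u_x$ on the nose. The main obstacle is exactly this bookkeeping: $G*u^3$ involves cubes of hyperbolic cosines and several subdomains have to be handled separately. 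However, no distributional jump condition at the peak needs to be appended: $c-u^2$ vanishes at $\zeta=1/2$ (since the peak value of $u$ is $\sqrt{c}$), so $(c-u^2)u_x$ extends continuously across the peak, matching the continuity of $\partial_x(G*h)$ for bounded $h$. Hence once the identity is checked at smooth points it automatically holds on all of $\mathbb{S}$, completing the verification that $u$ is a global periodic weak solution in the sense of Definition \ref{2.2}.
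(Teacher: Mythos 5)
Your proposal is correct and follows the same overall strategy as the paper: integrate the weak formulation by parts to cancel the initial-data term, use the traveling-wave relation $u_t=-cu_x$ to reduce everything to a pointwise spatial identity, and then verify that identity by explicitly evaluating the periodic convolutions against $G$. Where you differ is in the bookkeeping. The paper works directly with the hyperbolic functions: it computes $G*\bigl(3\operatorname{sh}(\zeta)+\tfrac{7}{2}\operatorname{sh}^3(\zeta)\bigr)$ and $G_x*\bigl(\operatorname{ch}(\zeta)\operatorname{sh}^2(\zeta)\bigr)$ separately, splitting $\mathbb{S}$ at $0$, $ct$, $x$ and invoking $\operatorname{sh}(3x)=4\operatorname{sh}^3(x)+3\operatorname{sh}(x)$ and $2\operatorname{sh}^2(x)=\operatorname{ch}(2x)-1$, before recombining via addition formulas to reach the residual $\bigl(-ac+a^3\operatorname{ch}^2(1/2)\bigr)\operatorname{sh}(\zeta)=0$. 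You instead exploit $u^2-u_x^2=a^2$ at the outset to collapse the nonlinearities to the two convolutions $\partial_x(G*u^3)$ and $\partial_x(G*u)$, together with the observation $u=2a\operatorname{sh}(\tfrac12)G$; this is a genuinely cleaner decomposition (fewer and simpler integrals, and $\partial_x$ pulled outside the convolutions throughout), though the residual computation is of the same elementary character. You also correctly isolate the one place the amplitude--speed relation $a^2=c/\operatorname{ch}^2(1/2)$ enters: the right-hand side $(c-u^2)u_x$ must be continuous across the peak to match the Lipschitz regularity of $\partial_x(G*h)$ for bounded $h$, which is exactly the content of the paper's final cancellation. The only thing left implicit in your write-up is the actual evaluation of $G*G$ and $G*G^3$; since these are elementary antiderivatives of products of hyperbolic functions and you have specified precisely how to organize them, this is a matter of executing the computation rather than a gap in the argument.
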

\begin{rem}
Note that all peakons in (\ref{p3.1}) move with positive wave speed $c>0$. Each positive wave speed has a peakon 
and anti-peakon of opposite amplitudes: $a=\pm\frac{\sqrt{c}}{\operatorname{ch}(\frac{1}{2})}$. 
\end{rem}

\begin{proof}
 We identify $\mathbb{S}=[0, 1)$ and regard $u$ as periodic functions 
on $\mathbb{S}$. Clearly, $u(\zeta)$ is continuous for $\zeta\in \mathbb{S}$ with peak at $\zeta=0$. Notice that $u(\zeta)$ is 
smooth on $(0, 1)$  and for all $t\in \mathbb{R}^+$
\begin{equation}\label{p3.2}
u_{x}=-a\operatorname{sh}(\zeta),
\end{equation}
in the sense of periodic distribution. Moreover, (\ref{p3.2}) belongs to $L^{\infty}(\mathbb{S})$.  Then
\begin{equation}
\lim_{t\rightarrow0^+}\|u(t, \cdot)-u_{0}(\cdot)\|_{W^{1, \infty}(\mathbb{S})}=0.
\end{equation}
In view of (\ref{p3.2}), we have
\begin{equation}\label{p3.4}
u_{t}=ac\operatorname{sh}(\zeta)\in L^\infty(\mathbb{S}),\ t\geq0.
\end{equation}
Hence,  we deduce from integration by parts that, for every test function $\phi(t, x)\in C^\infty_{c}\left([0, \infty)\times
\mathbb{S} \right)$,
\begin{equation}\label{p3.5}
\begin{array}{ll}
\displaystyle\int^\infty_{0}\int_{\mathbb{S}}\left(u\partial_{t}\phi+\frac{1}{3}u^3\partial_{x}\phi\right)dxdt+\int_{\mathbb{S}}
u_{0}\phi(x, 0)dx \\[4mm]
\quad\quad\quad\quad=-\displaystyle\int^\infty_{0}\int_{\mathbb{S}}\phi(\partial_{t}u+u^2\partial_{x}u)dxdt\\[4mm]
\quad\quad\quad\quad=-\displaystyle\int^\infty_{0}\int_{\mathbb{S}}\phi\left((ac
-a^3)\operatorname{sh}(\zeta)-a^3
\operatorname{sh}^3(\zeta)\right)dxdt,
\end{array}
\end{equation}
by using (\ref{p3.2})-(\ref{p3.4}) and the following identity 
\begin{equation}\label{p3.6}
u^2\partial_{x}u=-a^3\operatorname{ch}^2(\zeta)\operatorname{sh}(\zeta)=-a^3\left(\operatorname{sh}(\zeta)
-\operatorname{sh}^3(\zeta)\right).
\end{equation}
On the other hand, 
\begin{equation}\label{p3.7}
\begin{array}{ll}
\displaystyle\int^\infty_{0}\int_{\mathbb{S}}\left[G(x)*\left(u^3+\frac{3}{2}u(\partial_{x}u)^2\right)\partial_{x}\phi-\frac{1}{2}G(x)*
(\partial_{x}u)^3\phi\right]dxdt\\[4mm]
\qquad=-\displaystyle\int^\infty_{0}\int_{\mathbb{S}}\phi G(x)*\left(3u^2\partial_{x}u+\frac{1}{2}(\partial_{x}u)^3\right)
dxdt\\[4mm]
\qquad\qquad\qquad-\displaystyle\int^\infty_{0}\int_{\mathbb{S}}\phi G(x)*\left(\frac{3}{2}u(\partial_{x}u)^2\right)dxdt.
\end{array}
\end{equation}
Combining (\ref{p3.2}) with (\ref{p3.6}), we obtain
$$
3u^2\partial_{x}u+\frac{1}{2}(\partial_{x}u)^3=-3a^3\operatorname{ch}^2(\zeta)\operatorname{sh}(\zeta)-\frac{1}{2}a^3
\operatorname{sh}^3(\zeta)=-3a^3\operatorname{sh}(\zeta)-\frac{7}{2}a^3\operatorname{sh}^3(\zeta)
$$
and 
$$
\frac{3}{2}u(\partial_{x}u)^2=\frac{3a^3}{2}\operatorname{ch}(\zeta)\operatorname{sh}^2(\zeta),
$$
which together with (\ref{p3.7}) lead to
\begin{equation}\label{p3.8}
\begin{array}{ll}
\displaystyle\int^\infty_{0}\int_{\mathbb{S}}\left[G(x)*\left(u^3+\frac{3}{2}u(\partial_{x}u)^2\right)\partial_{x}\phi-
\frac{1}{2}G(x)*(\partial_{x}u)^3\phi\right]dxdt\\[4mm]
\qquad\qquad=a^3\displaystyle\int^\infty_{0}\int_{\mathbb{S}}\phi G(x)*\left(3\operatorname{sh}(\zeta)+\frac{7}{2}
\operatorname{sh}^3(\zeta)\right)dxdt\\[4mm]
\quad\qquad\qquad\qquad\displaystyle-\frac{3a^3}{2}\int^\infty_{0}\int_{\mathbb{S}}\phi G_{x}(x)*\left(\operatorname
{ch}(\zeta)\operatorname{sh}^2(\zeta)\right)dxdt.
\end{array}
\end{equation}
Noticing from the definition of $G(x)$ for the periodic case that
$$
\partial_{x}G(x)=-\frac{\operatorname{sh}(1/2-x+[x])}{2\operatorname{sh}(1/2)}, \  x\in\mathbb{R},
$$
we obtain
\begin{equation}\label{p3.9}
\begin{array}{ll}
\qquad G(x)*\left(3\operatorname{sh}(\zeta)+\displaystyle\frac{7}{2}\operatorname{sh}^3(\zeta)\right)(t, x)\\[4mm]
=\displaystyle\frac{1}{2\operatorname{sh}(1/2)}\int_{\mathbb{S}}\operatorname{ch}(1/2-(x-y)+[x-y])\cdot
(3\operatorname{sh}(1/2-(y-ct)+[y-ct])\\[4mm]
\qquad\qquad\qquad\qquad \qquad+\displaystyle\frac{7}{2}\operatorname{sh}^3(1/2-(y-ct)+[y-ct]))dy
\end{array}
\end{equation}
and 
\begin{equation}\label{p3.10}
\begin{array}{ll}
\qquad G_{x}(x)*\left(\operatorname{ch}(\zeta)\operatorname{sh}^2(\zeta)\right)(t, x)\\[4mm]
=-\displaystyle\frac{1}{2\operatorname{sh}(1/2)}\int_{\mathbb{S}}\operatorname{sh}(1/2-(x-y)+[x-y])\cdot
\operatorname{ch}(1/2-(y-ct)+[y-ct])\\[4mm]
\qquad\qquad\qquad\qquad \qquad\cdot\displaystyle\operatorname{sh}^2(1/2-(y-ct)+[y-ct])dy.
\end{array}
\end{equation}
When $x>ct$, we split the right-hand side of (\ref{p3.9}) into the following three parts:
\begin{equation}\label{p3.11}
\begin{array}{ll}
\qquad G(x)*\left(3\operatorname{sh}(\zeta)+\displaystyle\frac{7}{2}\operatorname{sh}^3(\zeta)\right)(t, x)\\[4mm]
=\displaystyle\frac{1}{2\operatorname{sh}(1/2)}\left(\int^{ct}_{0}+\int^x_{ct}+\int^1_{x}\right)\operatorname{ch}
(1/2-(x-y)+[x-y])\\[4mm]
\qquad\displaystyle\cdot\left(3\operatorname{sh}(1/2-(y-ct)+[y-ct])+\frac{7}{2}\operatorname{sh}^3
(1/2-(y-ct)+[y-ct]))dy\right)\\[4mm]
=I_{1}+I_{2}+I_{3}.
\end{array}
\end{equation}
Using the identity $\operatorname{sh}(3x)=4\operatorname{sh}^3(x)+3\operatorname{sh}(x)$, we directly compute 
$I_{1}$ as follows:
\begin{equation}\label{p3.12}
\begin{array}{ll}
I_{1}=\displaystyle\frac{1}{2\operatorname{sh}(1/2)}\int^{ct}_{0}\operatorname{ch}(1/2-x+y)
\cdot\left(3\displaystyle
\operatorname{sh}(-1/2+ct-y)+\frac{7}{2}\operatorname{sh}^3(-1/2+ct-y)\right)dy\\[4mm]
\quad=\displaystyle\frac{1}{2\operatorname{sh}(1/2)}\bigg(\int^{ct}_{0}\frac{3}{8}
\operatorname{ch}(1/2-x+y)\cdot\operatorname{sh}(-1/2+ct-y)dy\\[4mm]
\qquad\qquad\qquad \displaystyle+\int^{ct}_{0}\frac{7}{8}\operatorname{ch}(1/2-x+y)
\cdot\operatorname{sh}(-3/2+3ct-3y)dy\bigg)\\[4mm]
\quad=\displaystyle\frac{1}{32\operatorname{sh}(1/2)}(-3ct\operatorname{sh}(x-ct)-
\frac{3}{2}\operatorname{ch}(1-x+ct)+
\frac{3}{2}\operatorname{ch}(1-x-ct))\\[4mm]
\quad+\displaystyle\frac{7}{96\operatorname{sh}(1/2)}\left(-\frac{3}{2}\operatorname{ch}
(1+x-ct)+\frac{3}{2}
\operatorname{ch}(1+x-3ct)-\frac{3}{4}\operatorname{ch}(2-x+ct)+\frac{3}{4}
\operatorname{ch}(2-x-3ct)\right)\\[4mm]
\quad=\displaystyle\frac{3}{64\operatorname{sh}(1/2)}\bigg(-2ct\operatorname{sh}(x-ct)
-\operatorname{ch}(1-x+ct)+
\operatorname{ch}(1-x-ct) \\[4mm]
\qquad\displaystyle-\frac{7}{3}\operatorname{ch}(1+x-ct)+\frac{7}{3}
\operatorname{ch}(1+x-3ct)-\frac{7}{6}\operatorname{ch}
(2-x+ct)+\frac{7}{6}\operatorname{ch}(2-x-3ct) \bigg),
\end{array}
\end{equation}
Similarly,
\begin{equation}\label{p3.13}
\begin{array}{ll}
I_{2}=\displaystyle\frac{1}{2\operatorname{sh}(1/2)}\int^{x}_{ct}\operatorname{ch}
(1/2-x+y)\cdot\left(3\displaystyle
\operatorname{sh}(1/2+ct-y)+\frac{7}{2}\operatorname{sh}^3(1/2+ct-y)\right)dy\\[4mm]
\quad=\displaystyle\frac{1}{2\operatorname{sh}(1/2)}\int^{x}_{ct}\operatorname{ch}
(1/2-x+y)\cdot\bigg(\frac{3}{8}
\operatorname{sh}(1/2+ct-y)+\frac{7}{8}\operatorname{sh}(3/2+3ct-3y)\bigg)dy\\[4mm]
\quad=\displaystyle\frac{3}{64\operatorname{sh}(1/2)}\bigg(2(x-ct)\operatorname{sh}
(1-x+ct)-\frac{7}{3}\operatorname{ch}
(2-3x+3ct)+\frac{7}{3}\operatorname{ch}(2-x+ct)\\[4mm]
\qquad\qquad\qquad\qquad\displaystyle-\frac{7}{6}\operatorname{ch}(1-3x+3ct)
+\frac{7}{6}\operatorname{ch}(1+x-ct) \bigg),
\end{array}
\end{equation}
and
\begin{equation}\label{p3.14}
\begin{array}{ll}
I_{3}=\displaystyle\frac{1}{2\operatorname{sh}(1/2)}\int^{1}_{x}\operatorname{ch}(1/2-x+y)
\cdot\left(3\displaystyle
\operatorname{sh}(1/2+ct-y)+\frac{7}{2}\operatorname{sh}^3(1/2+ct-y)\right)dy\\[4mm]
\quad=\displaystyle\frac{1}{2\operatorname{sh}(1/2)}\int^{1}_{x}\operatorname{ch}(1/2-x+y)
\cdot\bigg(\frac{3}{8}
\operatorname{sh}(1/2+ct-y)+\frac{7}{8}\operatorname{sh}(3/2+3ct-3y)\bigg)dy\\[4mm]
\quad=\displaystyle\frac{3}{64\operatorname{sh}(1/2)}\bigg(-2(1-x)\operatorname{sh}(x-ct
)-\operatorname{ch}(1-x-ct)
+\operatorname{ch}(1-x+ct)\\[4mm]
\qquad\displaystyle-\frac{7}{3}\operatorname{ch}(1+x-3ct)+\frac{7}{3}\operatorname{ch}(1-3x+3ct)
-\frac{7}{6}\operatorname{ch}(2-x-3ct)+\frac{7}{6}\operatorname{ch}(2-3x+3ct) \bigg).
\end{array}
\end{equation}
Plugging (\ref{p3.12}), (\ref{p3.13}) and (\ref{p3.14}) into (\ref{p3.11}), we deduce that for $x>ct$, 
\begin{equation}\label{p3.15}
\begin{array}{ll}
\qquad G(x)*\left(3\operatorname{sh}(\zeta)+\displaystyle\frac{7}{2}\operatorname{sh}^3
(\zeta)\right)(t, x)\\[4mm]
=\displaystyle\frac{3}{64\operatorname{sh}(1/2)}\bigg(2(x-ct)\operatorname{sh}(1-x+ct)
-2(1-x+ct)\operatorname{sh}(x-ct)\\[4mm]
\quad\displaystyle-\frac{7}{6}\operatorname{ch}(1+x-ct)+\frac{7}{6}\operatorname{ch}(2-x+ct)
-\frac{7}{6}\operatorname{ch}(2-3x+3ct)
+\frac{7}{6}\operatorname{ch}(1-3x+3ct)\bigg).
\end{array}
\end{equation}
On the other hand, when $x>ct$,  the right-hand side of (\ref{p3.10}) can be split into the 
following three parts:
\begin{equation}\label{p3.16}
\begin{array}{ll}
\qquad G_{x}(x)*\left(\operatorname{ch}(\zeta)\operatorname{sh}^2(\zeta)\right)(t, x)\\[4mm]
=-\displaystyle\frac{1}{2\operatorname{sh}(1/2)}\left(\int^{ct}_{0}+\int^x_{ct}+\int^1_{x}
\right)\operatorname{sh}(1/2-(x-y)+[x-y])\\[4mm]
 \qquad\cdot \operatorname{ch}(1/2-(y-ct)+[y-ct])\cdot\displaystyle\operatorname{sh}^2
 (1/2-(y-ct)+[y-ct])dy\\[4mm]
=J_{1}+J_{2}+J_{3}.
\end{array}
\end{equation}
For $J_{1}$, using the identity $2\operatorname{sh}^2(x)=\operatorname{ch}(2x)-1$, a 
direct calculation leads to
\begin{equation}\label{p3.17}
\begin{array}{ll}
J_{1}=-\displaystyle\frac{1}{2\operatorname{sh}(1/2)}\int^{ct}_{0}\operatorname{sh}(1/2-x+y)\cdot
\operatorname{ch}(1/2-ct+y)
\cdot\operatorname{sh}^2(1/2-ct+y)dy\\[4mm]
\quad=-\displaystyle\frac{1}{4\operatorname{sh}(1/2)}\int^{ct}_{0}\operatorname{sh}(1/2-x+y)
\cdot\operatorname{ch}(1/2-ct+y)
\cdot\left(\operatorname{ch}(1-2ct+2y)-1\right)dy\\[4mm]
\quad=-\displaystyle\frac{1}{32\operatorname{sh}(1/2)}\bigg(2ct\operatorname{sh}(x-ct)+\frac{1}{2}
\operatorname{ch}(2-x+ct)-
\frac{1}{2}\operatorname{ch}(2-x-3ct) \\[4mm]
\qquad\displaystyle -\operatorname{ch}(1-x+ct)+ \operatorname{ch}(1-x-ct)-\operatorname{ch}(1+x-ct)
+\operatorname{ch}
(1+x-3ct)\bigg).
\end{array}
\end{equation}
Similarly, we obtain
\begin{equation}\label{p3.18}
\begin{array}{ll}
J_{2}=-\displaystyle\frac{1}{2\operatorname{sh}(1/2)}\int^{x}_{ct}\operatorname{sh}(1/2-x+y)\cdot
\operatorname{ch}(1/2+ct-y)\cdot\operatorname{sh}^2(1/2+ct-y)dy\\[4mm]
\quad=-\displaystyle\frac{1}{4\operatorname{sh}(1/2)}\int^{x}_{ct}\operatorname{sh}(1/2-x+y)\cdot
\operatorname{ch}(1/2+ct-y)\cdot\left(\operatorname{ch}(1+2ct-2y)-1)\right)dy\\[4mm]
\quad=-\displaystyle\frac{1}{32\operatorname{sh}(1/2)}\bigg(-2(x-ct)\operatorname{sh}(1-x+ct)
-\frac{1}{2}\operatorname{ch}(1+x-ct)
+\frac{1}{2}\operatorname{ch}(1-3x+3ct) \\[4mm]
\qquad\qquad\qquad\qquad\qquad\displaystyle -\operatorname{ch}(2-3x+3ct)+
 \operatorname{ch}(2-x+ct)\bigg),
\end{array}
\end{equation}
and
\begin{equation}\label{p3.19}
\begin{array}{ll}
J_{3}=-\displaystyle\frac{1}{2\operatorname{sh}(1/2)}\int^{1}_{x}\operatorname{sh}(-1/2-x+y)
\cdot\operatorname{ch}(1/2+ct-y)
\cdot\operatorname{sh}^2(1/2+ct-y)dy\\[4mm]
\quad=-\displaystyle\frac{1}{4\operatorname{sh}(1/2)}\int^{1}_{x}\operatorname{sh}(-1/2-x+y)
\cdot\operatorname{ch}(1/2+ct-y)
\cdot\left(\operatorname{ch}(1+2ct-2y)-1\right)dy\\[4mm]
\quad=-\displaystyle\frac{1}{32\operatorname{sh}(1/2)}\bigg(2(1-x)\operatorname{sh}(x-ct)+\frac{1}{2}
\operatorname{ch}(2-x-3ct)-
\frac{1}{2}\operatorname{ch}(2-3x+3ct) \\[4mm]
\quad\qquad\displaystyle-\operatorname{ch}(1+x-3ct)+ \operatorname{ch}(1-3x+ct)-\operatorname{ch}
(1-x-ct)+\operatorname{ch}
(1-x+ct)\bigg).
\end{array}
\end{equation}
Plugging (\ref{p3.17}), (\ref{p3.18}) and (\ref{p3.19}) into (\ref{p3.16}), we deduce that for $x>ct$,
\begin{equation}\label{p3.20}
\begin{array}{ll}
\qquad G_{x}(x)*\left(\operatorname{ch}(\zeta)\operatorname{sh}^2(\zeta)\right)(t, x)\\[4mm]
=-\displaystyle\frac{1}{32\operatorname{sh}(1/2)}\bigg(2(1-x+ct)\operatorname{sh}(x-ct)-2(x-ct)
\operatorname{sh}(1-x+ct)\\[4mm]
\qquad\displaystyle-\frac{3}{2}\operatorname{ch}(1+x-ct)+\frac{3}{2}\operatorname{ch}(2-x+ct)-
\frac{3}{2}\operatorname{ch}
(2-3x+3ct)+\frac{3}{2}\operatorname{ch}(1-3x+3ct)\bigg).
\end{array}
\end{equation}
It follows from (\ref{p3.7}), (\ref{p3.8}), (\ref{p3.15}) and (\ref{p3.20}) that
\begin{equation}\label{p3.21}
\begin{array}{ll}
\quad\displaystyle\int^\infty_{0}\int^1_{ct}\left[G(x)*\left(u^3+\frac{3}{2}u(\partial_{x}u)^2
\right)\partial_{x}\phi-\frac{1}{2}G(x)*
(\partial_{x}u)^3\phi\right]dxdt\\[4mm]
=a^3\displaystyle\int^\infty_{0}\int^1_{ct}\phi G(x)*\left(3\operatorname{sh}(\zeta)+\frac{7}{2}
\operatorname{sh}^3(\zeta)\right)
-\frac{3}{2}\phi G_{x}(x)*\bigg(\operatorname{ch}(\zeta)\cdot\operatorname{sh}^2(\zeta)\bigg)dxdt\\[4mm]
=\displaystyle\frac{a^3}{8\operatorname{sh}(1/2)}\int^\infty_{0}\int^1_{ct}\phi\bigg(\operatorname{ch}
(2-x+ct)-\operatorname{ch}
(1+x-ct)\\[4mm]
\qquad\qquad\qquad\qquad\qquad+\operatorname{ch}(1-3x+3ct)-\operatorname{ch}(2-3x+3ct)\bigg)dxdt.
\end{array}
\end{equation}
Using the identity
$$
\operatorname{ch}(x+y)=\operatorname{ch}(x)\cdot\operatorname{ch}(y)+\operatorname{sh}(x)\cdot
\operatorname{sh}(y),
$$
we have
$$
\begin{array}{ll}
\operatorname{ch}(2-x+ct)=\operatorname{ch}(3/2)\cdot\operatorname{ch}(1/2-x+ct)+\operatorname{sh}
(3/2)\cdot\operatorname{sh}(1/2-x+ct)\\[4mm]
\operatorname{ch}(1+x-ct)=\operatorname{ch}(3/2)\cdot\operatorname{ch}(1/2-x+ct)-\operatorname{sh}
(3/2)\cdot\operatorname{sh}(1/2-x+ct)\\[4mm]
\operatorname{ch}(1-3x+3ct)=\operatorname{ch}(1/2)\operatorname{ch}(3/2-3x+3ct)-\operatorname{sh}
(1/2)\cdot\operatorname{sh}(1/2-x+ct)
\end{array}
$$
and
$$
\operatorname{ch}(2-3x+3ct)=\operatorname{ch}(1/2)\cdot\operatorname{ch}(3/2-3x+3ct)+\operatorname{sh}(1/2)
\cdot\operatorname{sh}(3/2-3x+3ct),
$$
which along with (\ref{p3.21}), we have 
\begin{equation}\label{p3.22}
\begin{array}{ll}
\quad\displaystyle\int^\infty_{0}\int^1_{ct}\left[G(x)*\left(u^3+\frac{3}{2}u(\partial_{x}u)^2
\right)\partial_{x}\phi-\frac{1}{2}G(x)*(\partial_{x}u)^3\phi\right]dxdt\\[4mm]
=\displaystyle\frac{a^3}{8\operatorname{sh}(1/2)}\int^\infty_{0}\int^1_{ct}\phi\bigg(2\operatorname{sh}(3/2)\cdot
\operatorname{sh}(1/2-x+ct)-2\operatorname{sh}(1/2)\cdot\operatorname{sh}(3/2-3x+3ct)\bigg)dxdt\\[4mm]
=\displaystyle\frac{a^3}{8\operatorname{sh}(1/2)}\int^\infty_{0}\int^1_{ct}\phi\bigg(8\operatorname{sh}^3(1/2)\cdot
\operatorname{sh}(1/2-x+ct)-8\operatorname{sh}(1/2)\cdot\operatorname{sh}^3(1/2-x+ct)\bigg)dxdt\\[4mm]
=a^3\displaystyle\int^\infty_{0}\int^1_{ct}\phi\left(\operatorname{sh}^2(1/2)\cdot\operatorname{sh}(1/2-x+ct)-
\operatorname{sh}^3(1/2-x+ct) \right)dxdt.
\end{array}
\end{equation}
In a similar manner, for the case of $x<ct$, we have
\begin{equation}\label{p3.23}
\begin{array}{ll}
\quad\displaystyle\int^\infty_{0}\int^{ct}_{0}\left[G(x)*\left(u^3+\frac{3}{2}u(\partial_{x}u)^2\right)
\partial_{x}\phi-\frac{1}{2}G(x)*(\partial_{x}u)^3\phi\right]dxdt\\[4mm]
\quad=a^3\displaystyle\int^\infty_{0}\int^{ct}_{0}\phi G(x)*\left(3\operatorname{sh}(\zeta)+\frac{7}{2}
\operatorname{sh}^3(\zeta)\right)-\frac{3}{2}\phi G_{x}(x)*\bigg(\operatorname{ch}(\zeta)
\cdot\operatorname{sh}^2(\zeta)\bigg)dxdt\\[4mm]
\quad=\displaystyle\frac{a^3}{8\operatorname{sh}(1/2)}\int^\infty_{0}\int^{ct}_{0}\phi
\bigg(-\operatorname{ch}(2+x-ct)
-\operatorname{ch}(1-x+ct)\\[4mm]
\qquad\qquad\qquad\qquad\qquad-\operatorname{ch}(1+3x-3ct)-\operatorname{ch}
(2+3x-3ct)\bigg)dxdt\\[4mm]
\quad=a^3\displaystyle\int^\infty_{0}\int^{ct}_{0}\phi\left(-\operatorname{sh}^2(1/2)\cdot
\operatorname{sh}(1/2+x-ct)+\operatorname{sh}^3(1/2+x-ct)\right)dxdt.
\end{array}
\end{equation}
Hence, associated with (\ref{p3.22}), we have
\begin{equation}\label{p3.23}
\begin{array}{ll}
\quad\displaystyle\int^\infty_{0}\int_{\mathbb{S}}\left[G(x)*\left(u^3+\frac{3}{2}u(\partial_{x}u)^2
\right)\partial_{x}\phi-\frac{1}{2}G(x)*(\partial_{x}u)^3\phi\right]dxdt\\[4mm]
\qquad=a^3\displaystyle\int^\infty_{0}\int_{\mathbb{S}}\phi\left(\operatorname{sh}^2(1/2)\cdot
\operatorname{sh}(\zeta)-\operatorname{sh}^3(\zeta)\right)dxdt.
\end{array}
\end{equation}
In vier of (\ref{p3.5}), (\ref{p3.7}) and (\ref{p3.23}), using the definition of $u$, we deduce that
\begin{equation}\label{p3.25}
\begin{array}{ll}
-(ac-a^3)\operatorname{sh}(\zeta)+a^3\operatorname{sh}^3(\zeta)-G(x)*\displaystyle\left(3u^2
\partial_{x}u+\frac{1}{2}(\partial_{x}u)^3\right)(t, x)\\[4mm]
\qquad\qquad\qquad\qquad\qquad\qquad-\displaystyle\frac{3}{2}G_{x}(x)*\left(u
(\partial_{x}u)^2\right)(t, x)\\[4mm]
\quad=-(ac-a^3)\operatorname{sh}(\zeta)+a^3\operatorname{sh}^2(1/2)\operatorname{sh}(\zeta)\\[4mm]
\quad=\left(-ac+a^3\operatorname{ch}^2(1/2)\right)\operatorname{sh}(\zeta)\\[4mm]
\quad=0
\end{array}
\end{equation}
for all $(x, t)\in\mathbb{R}\times\mathbb{R}^+$. Therefore, we conclude from (\ref{p3.5}), (\ref{p3.7}) 
and (\ref{p3.25}) that
$$
\begin{array}{ll}
\displaystyle\int^T_{0}\int_{\mathbb{S}}\left[u\partial_{t}\phi+\frac{1}{3}u^3\partial_{x}\phi+G(x)*
\left(u^3+\frac{3}{2}uu^2_{x}\right)\partial_{x}\phi-G(x)*\left(\frac{u^3_{x}}{2}\right)\phi\right]dxdt\\[3mm]
\quad\quad\quad\quad\quad \quad\quad\quad\quad\quad\quad+\displaystyle\int_{\mathbb{S}}u_{0}
(x)\phi(0,x)dx=0,
\end{array}
$$
for any smooth test function $\phi(t, x)\in C^{\infty}_{c}\left([0, \infty)\times \mathbb{S}\right)$, which 
completes the proof
of Theorem \ref{thp3.1}.
\end{proof}

\section{Stability of Periodic Peakons}

In this section, we main prove the stability of periodic peakons for Novikov equation.  
First of all, we give some properties of periodic peakons.
It is obvious that
\begin{equation}\label{PP}
    u(x,t)=\varphi_{c}(x,t)=\sqrt{c}\varphi(x-ct),
\end{equation}
where $\varphi(x)$ is given for $x\in [0,1]$ by
\begin{equation}\label{PP1}
    \varphi(x)=\frac{\operatorname{ch}(\frac{1}{2}-x)}{\operatorname{ch}(\frac{1}{2})},
\end{equation}
and extends periodically to the entire line. We still identify $\mathbb{S}$ with the interval
$[0,1)$ and view functions on $\mathbb{S}$ as periodic functions on the entire line of period one.

Notice that $\varphi(x)$ is continuous on the interval $(0,1)$ and has the peak at $x=0$.
In view of (\ref{PP1}), we obtain
\begin{equation}\label{Mm}
M_{\varphi}=\max_{x\in\mathbb{S}}{\varphi(x)}=\varphi(0)=1\quad and \quad m_{\varphi}=\min_
{x\in\mathbb{S}}{\varphi(x)}=\varphi(\frac{1}{2})=\frac{1}{\operatorname{ch}(\frac{1}{2})}
\end{equation}
Moreover, $\varphi(x)$ is smooth on $(0,1)$, and satisfies
\begin{equation}\label{D1}
\varphi_{x}(x)=-\frac{\operatorname{sh}(\frac{1}{2}-x)}{\operatorname{ch}(\frac{1}{2})},
\end{equation}
with
$$
\lim_{x\rightarrow 0}\varphi_{x}(x)= -\operatorname{th}(\frac{1}{2}), \quad and\quad \lim_{x\rightarrow 1}
\varphi_{x}(x)=\operatorname{th}(\frac{1}{2}).
$$
Using the formulation $\varphi_{xx}(x)=\varphi(x)-2\operatorname{th}(\frac{1}{2})\delta$ and integration
 by parts, we obtain
\begin{equation}\label{E1}
\begin{array}{ll}
E_{2}(\varphi_{c})&=\displaystyle c\int_0^1(\varphi^2+\varphi^{2}_{x})dx\\[3mm]
            &=\displaystyle c\int_0^1(\varphi^2-\varphi\varphi_{xx})dx\\[3mm]
            &=\displaystyle c\int_0^1\left(\varphi^2-(\varphi-2\operatorname{th}(\frac{1}{2})\delta)
            \varphi\right)dx\\[3mm]
            &=\displaystyle2c \operatorname{th}(\frac{1}{2})=2\sqrt{c}\operatorname{th}(\frac{1}{2})
            M_{\varphi_{c}},
\end{array}
\end{equation}
where $M_{\varphi_{c}}=\max\limits_{x\in\mathbb{S}}\{\varphi_{c}(x)\}$, $\delta(x)=
\left \{
   \begin{array}{l}
      \infty,\quad x=0,\\
      0,\quad x\neq0,
   \end{array}
   \right.$
with $\displaystyle\int_{\mathbb{R}}\delta dx=1$.

Next, we can compute $E_{3}(\varphi_{c})$ directly as follows:
\begin{equation}\label{E2}
\begin{array}{ll}
E_{3}(\varphi_{c})&=\displaystyle c^2\int_0^1(\varphi^4+2\varphi^2\varphi^{2}_{x}-\frac{1}{3}
            \varphi^{4}_{x})dx\\[3mm]
            &=\displaystyle\frac{c^2}{\operatorname{ch}^4(\frac{1}{2})}\int_{-\frac{1}{2}}^{\frac{1}{2}}
            \left(\operatorname{ch}^4(x)+2\operatorname{ch}^2(x)\operatorname{sh}^2(x)-\frac{1}{3}
            \operatorname{sh}^4(x)
            \right)dx\\[3mm]
            &=\displaystyle\frac{c^2}{\operatorname{ch}^4(\frac{1}{2})}\int_{-\frac{1}{2}}^{\frac{1}{2}}
            \left(\frac{2}{3}\operatorname{sh}(3x)\operatorname{sh}(x)+2\operatorname{sh}^2(x)
            +1\right)dx\\[3mm]
            &=\displaystyle\frac{c^2}{\operatorname{ch}^4(\frac{1}{2})}\int_{-\frac{1}{2}}^{\frac{1}{2}}
            (\frac{1}{3}\operatorname{ch}(4x)+\frac{2}{3}\operatorname{ch}(2x))dx\\[3mm]
            &=\displaystyle\frac{c^2}{6}\frac{\operatorname{sh}(2)+4\operatorname{sh}(1)}
            {\operatorname{ch}^4(\frac{1}{2})}.
\end{array}
\end{equation}
Now using the double angle formulas, more precisely, it is well-known that
$$
\operatorname{sh}(2x)=2\operatorname{sh}(x)\operatorname{ch}(x) \quad and \quad \operatorname{ch}(2x)=
2\operatorname{ch}^2(x)-1=1+2\operatorname{sh}^2(x).
$$
Replace these identities in the previous expression above, we can rewrite it as
\begin{equation}\label{3.7}
E_{3}[\varphi_{c}]=2c^2[\operatorname{th}(\frac{1}{2})-\frac{1}{3}\operatorname{th}^3(\frac{1}{2})].
\end{equation}

We state the following theorem which shows that the periodic peakons of Novikov equation are orbitally stable.
\begin{theorem}\label{thSt}
Let $\varphi_{c}$ be the periodic peakons defined in (\ref{1.9}) traveling with speed $c>0$. Assume that $u_{0}\in H^s(\mathbb{S})$
for $s\geq3$, with $y_{0}=u_{0}-\partial^2_{x}u_{0}\geq0$. For every $\epsilon>0$,
there exists a $\delta>0$ such that if
$$
\|u_{0}-\varphi_{c}\|_{H^1(\mathbb{S})}<\delta,
$$
then the correspongding solution $u(t)$ of (\ref{2.1}) with initial data $u(0)=u_{0}$ satisfies
$$
\sup_{t\geq0}\|u(t, \cdot)-\varphi_{c}(\cdot-\xi(t))\|_{H^1(\mathbb{S})}<\epsilon,
$$
where $\xi(t)\in\mathbb{R}$ is the maximum point of the function$u(t, \cdot)$.
\end{theorem}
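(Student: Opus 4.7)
The plan is to adapt the Constantin--Strauss / Lenells strategy for periodic peakons (\cite{LJ}) to the quartic conservation law of Novikov, following the scheme used in \cite{Qu} for the modified CH equation. The two invariants $E_{2}$ and $E_{3}$ of Lemma 2.4, together with the extrema $M_{u}=\max_{x\in\mathbb{S}}u(x)$ and $m_{u}=\min_{x\in\mathbb{S}}u(x)$, will be combined to force the shape of $u(t,\cdot)$ to stay close to a translate of $\varphi_{c}$ for all $t\geq 0$. The global existence and positivity of $y(t,\cdot)$ under $y_{0}\geq 0$ is guaranteed by Lemma 2.3 and Remark 2.1.

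\textbf{Reduction.} From the distributional identity $\varphi-\varphi_{xx}=2\operatorname{th}(\tfrac12)\delta_{0}$ (built into the computation (\ref{E1})) and periodic integration by parts, for every $\xi\in\mathbb{S}$ and every $u\in H^{1}(\mathbb{S})$ one obtains $\langle u,\varphi_{c}(\cdot-\xi)\rangle_{H^{1}}=2\sqrt{c}\,\operatorname{th}(\tfrac12)\,u(\xi)$. Taking $\xi(t)$ to be a point where $u(t,\cdot)$ attains its maximum (which exists by $H^{1}\hookrightarrow C(\mathbb{S})$), this yields the key decomposition
$$\|u(t,\cdot)-\varphi_{c}(\cdot-\xi(t))\|_{H^{1}}^{2}=\bigl[E_{2}(u(t))-E_{2}(\varphi_{c})\bigr]+4\sqrt{c}\,\operatorname{th}(\tfrac12)\bigl(\sqrt{c}-M_{u}(t)\bigr).$$
Since $E_{2}$ is conserved and continuous on $H^{1}$, the first bracket is $O(\delta^{2})$; the theorem therefore reduces to showing $\sqrt{c}-M_{u}(t)\to 0$ uniformly in $t\geq 0$ as $\delta\to 0$.

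\textbf{Control of $M_{u}$.} The upper bound $M_{u}\leq \sqrt{c}+O(\delta^{2})$ is free: plugging $h:=u-M_{u}\varphi(\cdot-\xi)$ into $\int(h^{2}+h_{x}^{2})\,dx\geq 0$ gives the sharp inequality $E_{2}(u)\geq 2M_{u}^{2}\operatorname{th}(\tfrac12)$, saturated at the peakon. For the \emph{lower} bound on $M_{u}$ I would construct an auxiliary functional $\Psi(u;M_{u},m_{u})$, pointwise nonnegative on admissible profiles (using $y\geq 0$ and the bound $|u_{x}|\leq u$ from Remark 2.1) and vanishing exactly on $\varphi_{c}(\cdot-\xi)$, such that by splitting $\mathbb{S}$ at the extremum points and exploiting the monotonicity of $u$ on each subarc one obtains
$$E_{3}(u)=\gamma(M_{u},m_{u},E_{2}(u))+\Psi(u),$$
with $\gamma$ an explicit closed-form expression matching (\ref{3.7}) at the peakon values $M_{\varphi_{c}}=\sqrt{c}$, $m_{\varphi_{c}}=\sqrt{c}/\operatorname{ch}(\tfrac12)$. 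Conservation of $E_{2},E_{3}$ then yields a system of two inequalities $2M_{u}^{2}\operatorname{th}(\tfrac12)\leq E_{2}(\varphi_{c})+O(\delta^{2})$ and $\gamma(M_{u},m_{u},\cdot)\leq E_{3}(\varphi_{c})+O(\delta^{2})$, whose only root in a neighborhood of the peakon values is $(\sqrt{c},\sqrt{c}/\operatorname{ch}(\tfrac12))$; local invertibility gives $|\sqrt{c}-M_{u}|=O(\delta)$ uniformly in $t$. Combining this with the reduction above and Lemma 2.2 (continuous dependence) completes the proof.

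\textbf{Main obstacle.} The crux is constructing $\Psi$ and proving its nonnegativity. The density of $E_{3}$ carries a \emph{negative} quartic contribution $-\tfrac{1}{3}u_{x}^{4}$, so the completion-of-squares idea that works for the cubic CH density $F_{3}=\int(u^{3}+uu_{x}^{2})\,dx$ does not transfer. One must pair this term against the positive $u^{4}$ and $2u^{2}u_{x}^{2}$ contributions and use $|u_{x}|\leq u$ from Remark 2.1 to absorb the deficit; executing this in the periodic setting, where the peakon is a sum of two exponential branches on $[0,1]$ rather than a single one as in \cite{XLIU}, is the delicate quantitative estimate on which the whole argument hinges.
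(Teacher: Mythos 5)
Your skeleton matches the paper's: the decomposition $\|u-\varphi_{c}(\cdot-\xi)\|_{H^{1}}^{2}=E_{2}(u)-E_{2}(\varphi_{c})+4\sqrt{c}\operatorname{th}(\tfrac12)\bigl(M_{\varphi_{c}}-u(\xi)\bigr)$ is exactly Lemma \ref{lem3.1}, and your upper bound $E_{2}(u)\geq 2M_{u}^{2}\operatorname{th}(\tfrac12)$ is the sharp embedding of Lemma \ref{lem3.4}. But the entire difficulty of the theorem sits in the step you defer to a ``main obstacle,'' and what you do sketch there does not match a workable argument. The paper's Lemma \ref{lem3.2} does not produce an identity $E_{3}(u)=\gamma(M_{u},m_{u},E_{2}(u))+\Psi(u)$ with $\Psi$ pointwise nonnegative; it introduces two piecewise-defined functions $g=u_{x}\pm\sqrt{u^{2}-m^{2}}$ and $h=u^{2}\pm\tfrac{2}{3}u_{x}\sqrt{u^{2}-m^{2}}-\tfrac13 u_{x}^{2}-m^{2}$, computes $\int_{\mathbb{S}}g^{2}$ and $\int_{\mathbb{S}}hg^{2}$ exactly in terms of $M_{u},m_{u},E_{2},E_{3}$, and then bounds $h\leq\tfrac43(M^{2}-m^{2})$ pointwise by Young's inequality applied to the cross term (only positivity of $u$ is used here, not $|u_{x}|\leq u$; and $h$ itself need not be nonnegative, nor is the sign direction the one you wrote --- the output is an \emph{upper} bound $E_{3}(u)\leq F(M_{u},m_{u},E_{2}(u))$, i.e.\ $G_{u}(M_{u},m_{u})\geq 0$). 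Your claim that the negative $-\tfrac13 u_{x}^{4}$ term must be absorbed using $|u_{x}|\leq u$ points at the wrong mechanism.

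Two further steps that your ``only root / local invertibility'' phrase papers over are both essential and both nontrivial. First, one must verify by explicit computation (Lemma \ref{lem3.3}) that $G_{\varphi_{c}}(M,m)$ has a critical point at $(M_{\varphi_{c}},m_{\varphi_{c}})=(\sqrt{c},\sqrt{c}/\operatorname{ch}(\tfrac12))$ with value $0$ and \emph{negative definite Hessian} ($\partial_{M}^{2}G=-\tfrac{32}{3}c\operatorname{th}(\tfrac12)$, $\partial_{m}^{2}G=-\tfrac{16}{3}c\operatorname{th}(\tfrac12)$, $\partial_{M}\partial_{m}G=0$); without this nondegeneracy the constraint $G_{u}(M_{u},m_{u})\geq 0$ pins down nothing. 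Second, the set $\{G_{u}\geq 0\}$ could a priori have components far from the peakon values, so one needs the continuity of $t\mapsto(M_{u(t)},m_{u(t)})$ (Lemma \ref{lem3.5}, which follows from the embedding inequality, not from the local well-posedness result you cite) together with the time-independence of $G_{u(t)}$ to trap $(M_{u(t)},m_{u(t)})$ in the component near $(M_{\varphi_{c}},m_{\varphi_{c}})$ for all $t\geq 0$ (Lemma \ref{lem3.7}). As written, your proposal identifies the correct strategy but leaves unproved precisely the lemma that constitutes the proof.
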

\begin{rem}
Since the equation in (\ref{2.1}) is invariant under the inverse transform, namely, $-u$ is the solution of (\ref{2.1}) with initial 
data $-u_{0}$, if $u$ is the solution of (\ref{2.1}) with initial data $u_{0}$.  It implies that the periodic peakon $-\varphi_{c}$ 
is also orbitally stable in the sense of Theorem \ref{thSt} with the initial data $u_{0}(x)\in H^s(\mathbb{S}),\ s\geq3$ and $y_{0}
\leq0$ for any $x\in\mathbb{S}$.
\end{rem}

The proof of Theorem \ref{thSt} will proceed through a series of lemmas. First we consider the
expansion of the conservation
law $E_{2}$ around the peakon $\varphi_{c}$ in the $H^1(\mathbb{S})$-norm. 
The following lemma suggests that the error term in this expansion is given by $4\sqrt{c}\operatorname{th}
(\frac{1}{2})$ times the difference between $\varphi_{c}$ and the perturbed solution $u$ at
the point of their respective peaks.
\begin{lemma}\label{lem3.1}
For any $u\in H^1(\mathbb{S})$ and $\xi\in \mathbb{R}$, we have
\begin{equation}\label{3.8}
E_{2}(u)-E_{2}(\varphi_{c})=\|u-\varphi_{c}(\cdot-\xi)\|^2_{H^1(\mathbb{S})}+
4\sqrt{c}\operatorname{th}(\frac{1}{2})
\left(u(\xi)-M_{\varphi_{c}}\right).
\end{equation}
\end{lemma}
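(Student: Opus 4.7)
The plan is to recognize that $E_2(u) = \|u\|^2_{H^1(\mathbb{S})}$, so the claimed identity is purely an $H^1$-norm computation involving a cross-term inner product $\langle u, \varphi_c(\cdot-\xi)\rangle_{H^1}$. Expanding
\[
\|u-\varphi_c(\cdot-\xi)\|^2_{H^1} = \|u\|^2_{H^1} - 2\langle u, \varphi_c(\cdot-\xi)\rangle_{H^1} + \|\varphi_c\|^2_{H^1},
\]
and noting that the $H^1$-norm is translation invariant so $\|\varphi_c(\cdot-\xi)\|_{H^1} = \|\varphi_c\|_{H^1}$, the identity to prove reduces to
\[
\langle u,\varphi_c(\cdot-\xi)\rangle_{H^1} - \|\varphi_c\|^2_{H^1} = 2\sqrt{c}\operatorname{th}(1/2)\bigl(u(\xi)-M_{\varphi_c}\bigr).
\]

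The central step is computing that cross inner product. Since $\varphi$ is smooth away from the integers and has a corner at the peak with jump $\varphi_x(0^+)-\varphi_x(0^-) = -2\operatorname{th}(1/2)$, one obtains the distributional identity (already used implicitly in \eqref{E1})
\[
\varphi_{c,xx}(\cdot-\xi) = \varphi_c(\cdot-\xi) - 2\sqrt{c}\operatorname{th}(1/2)\,\delta_\xi
\]
on $\mathbb{S}$, where $\delta_\xi$ is the Dirac mass at $\xi$. Splitting $\mathbb{S}\setminus\{\xi\}$ into the smooth pieces and integrating by parts, using $u\in H^1(\mathbb{S})\hookrightarrow C(\mathbb{S})$ so that the boundary contributions pair against $u(\xi)$, yields
\[
\int_{\mathbb{S}} u_x\,\varphi_{c,x}(\cdot-\xi)\,dx = -\int_{\mathbb{S}} u\,\varphi_c(\cdot-\xi)\,dx + 2\sqrt{c}\operatorname{th}(1/2)\,u(\xi).
\]
Adding $\int u\,\varphi_c(\cdot-\xi)\,dx$ to both sides gives precisely $\langle u,\varphi_c(\cdot-\xi)\rangle_{H^1} = 2\sqrt{c}\operatorname{th}(1/2)\,u(\xi)$.

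Specializing this identity to $u = \varphi_c(\cdot-\xi)$ recovers $\|\varphi_c\|^2_{H^1} = 2\sqrt{c}\operatorname{th}(1/2)\,M_{\varphi_c}$, consistent with the computation \eqref{E1}. Substituting both expressions into the expansion above and rearranging then produces
\[
E_2(u) - E_2(\varphi_c) - \|u-\varphi_c(\cdot-\xi)\|^2_{H^1} = 2\langle u,\varphi_c(\cdot-\xi)\rangle_{H^1} - 2\|\varphi_c\|^2_{H^1} = 4\sqrt{c}\operatorname{th}(1/2)\bigl(u(\xi)-M_{\varphi_c}\bigr),
\]
which is \eqref{3.8}. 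The only genuine subtlety is the careful distributional integration by parts across the corner at $\xi$; everything else is algebra. A density argument approximating $u\in H^1(\mathbb{S})$ by smooth functions, or a direct splitting of the integration domain $\mathbb{S}$ into the two smooth arcs of $\varphi_c(\cdot-\xi)$ and summing the boundary terms, both handle this rigorously.
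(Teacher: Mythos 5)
Your proof is correct and follows essentially the same route as the paper: both expand $\|u-\varphi_c(\cdot-\xi)\|^2_{H^1}$, use translation invariance, and evaluate the cross term via the distributional identity $\varphi_{xx}=\varphi-2\operatorname{th}(\tfrac{1}{2})\delta$ together with integration by parts, then invoke $E_2(\varphi_c)=2\sqrt{c}\operatorname{th}(\tfrac{1}{2})M_{\varphi_c}$. The only difference is cosmetic: you isolate the inner-product identity $\langle u,\varphi_c(\cdot-\xi)\rangle_{H^1}=2\sqrt{c}\operatorname{th}(\tfrac{1}{2})u(\xi)$ as a separate step, whereas the paper carries out the same computation in one chain of equalities.
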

\begin{proof}
Using the formulation $\varphi_{xx}=\varphi-2\operatorname{th}(\frac{1}{2})\delta$, we compute
$$
\begin{array}{ll}
&\|u-\varphi_{c}(\cdot-\xi)\|^2_{H^1(\mathbb{S})}\\[3mm]
&=\displaystyle \int_{\mathbb{S}}(u(x)-\varphi_{c}(x-\xi))^2dx+\int_{\mathbb{S}}
(u_{x}(x)-\partial_{x}\varphi_{c}(x-\xi))^2dx\\[3mm]
&=E_{2}(u)+E_{2}(\varphi_{c}(\cdot-\xi))-2\displaystyle \int_{\mathbb{S}}u(x)\varphi_{c}
(x-\xi)dx-2\int_{\mathbb{S}}u_{x}(x)\partial_{x}
\varphi_{c}(x-\xi)dx\\[3mm]
&=E_{2}(u)+E_{2}(\varphi_{c})-2\sqrt{c}\displaystyle \int_{\mathbb{S}}u(x+\xi)
\varphi(x)dx+2\sqrt{c}\displaystyle \int_{\mathbb{S}}u(x+\xi)
\varphi_{xx}(x)dx\\[3mm]
&=\displaystyle E_{2}(u)+E_{2}(\varphi_{c})-4\sqrt{c}\operatorname{th}(\frac{1}{2})u(\xi).
\end{array}
$$
Since $E_{2}(\varphi_{c})=2\sqrt{c}\operatorname{th}(\frac{1}{2})M_{\varphi_{c}}$, we deduce that
$$
\|u-\varphi_{c}(\cdot-\xi)\|^2_{H^1(\mathbb{S})}=E_{2}(u)-E_{2}(\varphi_{c})+4\sqrt{c}\operatorname{th}
(\frac{1}{2})(M_{\varphi_{c}}-u(\xi)).
$$
\end{proof}

\begin{rem} \label{rem2.2}
It was discussed in \cite{AJ}  that for a wave profile $u\in H^1(\mathbb{S})$, the functional
$E_{2}(u)$ represents the kinetic energy. The formula (\ref{3.8}) indicates that if a wave has
energy and height close to the peakon's energy and height, then the whole shape of this wave
is close to that of the peakon. Also, among all waves of fixed energy, the peakon has the maximal height.
\end{rem}

The following lemma is crucial to establish the result of stability of periodic peakons.
\begin{lemma}\label{lem3.2}
For any positive $u\in H^s(\mathbb{S}),\ s\geq3$, define a function
$$
G_{u}:\{(M,m)\in \mathbb{R}^2|M\geq m>0\}\rightarrow\mathbb{R}
$$
by
\begin{equation}\label{3.9}
\begin{array}{ll}
G_{u}(M,m)&=\displaystyle(\frac{4}{3}M^2-\frac{1}{3}m^2)\\[3mm]
&\displaystyle\times\bigg(2m^2\ln(\frac{M+\sqrt{M^2-m^2}}{m})-2M\sqrt{M^2-m^2}-m^2\\[3mm]
&+E_{2}(u)\bigg)+\displaystyle\frac{4}{3}M(M^2-m^2)^{\frac{3}{2}}+m^2E_{2}(u)-E_{3}(u).
\end{array}
\end{equation}
Then
$$
G_{u}(M_{u},m_{u})\geq0,
$$
where $M_{u}=\max\limits_{x\in \mathbb{S}}\{u(x)\}$,\quad$m_{u}=\min\limits_{x\in \mathbb{S}}\{u(x)\}$.
\end{lemma}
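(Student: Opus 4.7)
My plan is to establish $G_{u}(M_{u},m_{u})\geq 0$ by a branch-wise analysis that combines the two conserved quantities $E_{2}(u)$ and $E_{3}(u)$ with the sign conditions carried by the momentum density $y$.

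I would begin by recording the pointwise consequences of the hypotheses. By Lemma~2.3 together with Remark~2.1, the solution satisfies $u>0$, $y=u-u_{xx}\geq 0$, and $|u_{x}|\leq u$ on $\mathbb{S}$. Differentiating gives
$$\bigl(u^{2}-u_{x}^{2}\bigr)_{x}=2u_{x}(u-u_{xx})=2u_{x}y,$$
so the quantity $h(x):=u^{2}(x)-u_{x}^{2}(x)$ is monotone on every interval where $u_{x}$ has a fixed sign; its global extrema therefore occur at the extrema of $u$, giving the sharp sandwich
$$m^{2}\leq u^{2}(x)-u_{x}^{2}(x)\leq M^{2},\qquad x\in \mathbb{S},$$
with $M=M_{u}$, $m=m_{u}$, and equivalently $|u_{x}|\leq\sqrt{u^{2}-m^{2}}$.

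The next step is to pass from integration over $\mathbb{S}$ to one-variable integrals on each monotone branch of $u$ via the change of variables $dx=du/u_{x}$ (with appropriate sign). The bound just obtained yields $dx\geq du/\sqrt{u^{2}-m^{2}}$ on an upward branch, which is the source of the transcendental quantity $L=\ln\!\bigl((M+\sqrt{M^{2}-m^{2}})/m\bigr)=\int_{m}^{M}du/\sqrt{u^{2}-m^{2}}$, while the polynomial moments $\int_{m}^{M}u^{k}\sqrt{u^{2}-m^{2}}\,du$ for $k=0,1$ generate the combinations $MN$ and $N^{3}$ with $N=\sqrt{M^{2}-m^{2}}$ that appear in $G_{u}$. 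To assemble the inequality I would integrate a carefully chosen pointwise non-negative ansatz, most naturally of the form $\bigl(\sqrt{u^{2}-m^{2}}-|u_{x}|\bigr)^{2}P(u)$ (or the product $(u^{2}-u_{x}^{2}-m^{2})(M^{2}-u^{2}+u_{x}^{2})\,Q(u)$), branch by branch. After the substitution $u_{x}\,dx=\pm du$, the cross term collapses to a pure $u$-integral producing the $L$, $MN$, and $N^{3}$ contributions, while the remaining squared pieces, combined with the algebraic identity
$$u^{4}+2u^{2}u_{x}^{2}-\tfrac{1}{3}u_{x}^{4}=(u^{2}-u_{x}^{2})^{2}+\tfrac{4}{3}u_{x}^{2}(3u^{2}-u_{x}^{2})$$
and $E_{2}(u)=\int(u^{2}+u_{x}^{2})\,dx$, recast the $x$-integrals in terms of $E_{2}(u)$ and $E_{3}(u)$, matching the expression defining $G_{u}(M,m)$.

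The main obstacle is the precise calibration of the polynomial weight $P$ (or $Q$). The correct ansatz is pinned down by two dual constraints: the integrand must be pointwise nonnegative, and equality must hold on the peakon $\varphi_{c}$, where $\varphi^{2}-\varphi_{x}^{2}\equiv m_{\varphi_{c}}^{2}$ almost everywhere, so that $\sqrt{u^{2}-m^{2}}=|u_{x}|$ forces the ansatz to vanish identically. These constraints together determine the coefficients $\tfrac{4}{3}M^{2}-\tfrac{1}{3}m^{2}$, $\tfrac{4}{3}M(M^{2}-m^{2})^{3/2}$, and the $L$-coefficient $\tfrac{2}{3}m^{2}(4M^{2}-m^{2})$ that define $G_{u}$. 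A secondary technical point is that, should $u$ have several maxima and minima in one period, the branch decomposition must still involve only the global $M$ and $m$; this follows from the additivity of branch integrals combined with the global sandwich on $h$, but needs some care to ensure the $L$, $MN$, and $N^{3}$ contributions aggregate correctly.
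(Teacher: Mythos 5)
Your strategy sits in the same family as the paper's argument (a Lenells-type branch decomposition between the global maximum point $\xi$ and minimum point $\eta$, quadrature identities generating the logarithm and the $(M^2-m^2)^{3/2}$ terms, and a pointwise non-negative integrand calibrated to vanish on the peakon), but there are two concrete gaps. The less serious one: you invoke $y\geq 0$ and $|u_x|\leq u$ to obtain the sandwich $m^2\leq u^2-u_x^2\leq M^2$, yet the lemma is stated for an arbitrary positive $u\in H^s(\mathbb{S})$ and no sign condition on $y$ is needed; the only pointwise inequality in the paper's proof is Young's inequality, which gives $h(x)\leq \frac{4}{3}(u^2-m^2)\leq \frac{4}{3}(M^2-m^2)$ for the auxiliary function $h=u^2\pm\frac{2}{3}u_x\sqrt{u^2-m^2}-\frac{1}{3}u_x^2-m^2$ without any reference to $y$. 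The more serious one is a degree count: neither of your proposed integrands can reproduce $E_3(u)=\int(u^4+2u^2u_x^2-\frac{1}{3}u_x^4)\,dx$. The ansatz $(\sqrt{u^2-m^2}-|u_x|)^2P(u)$ is only quadratic in $u_x$, so no choice of $P$ generates the $u_x^4$ term; and in the alternative $(u^2-u_x^2-m^2)(M^2-u^2+u_x^2)\,Q(u)$, matching the $-\frac{1}{3}u_x^4$ coefficient forces $Q\equiv\frac{1}{3}$, after which the $u^2u_x^2$ coefficient comes out as $\frac{2}{3}u^2$ plus constants rather than $2u^2$, leaving residual integrals such as $\int u^4\,dx$ and $\int u^2u_x^2\,dx$ separately, which are not expressible through $E_2$, $E_3$, $M$, $m$. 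The paper gets around this by multiplying \emph{two} functions: $g=u_x\pm\sqrt{u^2-m^2}$ and the $h$ above, so that $g^2h$ is genuinely quartic in $u_x$ and $\int_{\mathbb{S}}g^2h\,dx$ collapses exactly to $E_3(u)$, $E_2(u)$, $M$, $m$ and $\int g^2\,dx$; the inequality then follows from $\int(\frac{4}{3}(M^2-m^2)-h)g^2\,dx\geq 0$.

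A related point you dismiss as secondary is in fact essential: the signs in $g$ and $h$ are fixed by \emph{position} (one sign on the arc from $\xi$ to $\eta$, the other on the arc from $\eta$ to $\xi+1$), not by the sign of $u_x$. With that choice every cross term is of the form $\int u_x\,F'(u)\,dx$ over an arc, hence an exact increment $F(m)-F(M)$ that evaluates at the endpoints even when $u$ oscillates in between; this is what makes identity (3.11) and the corresponding identity for $\int g^2h\,dx$ exact with the \emph{global} $M$ and $m$. If you write the cross term with $|u_x|$ instead, it becomes a sum of branch integrals with absolute values that does not telescope when $u$ has interior local extrema, and the resulting quantity is no longer computable from $E_2$, $E_3$, $M$, $m$. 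So the non-monotone case is not a bookkeeping issue for your ansatz; it is where it breaks.
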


\begin{proof}
Let $0<u(x)\in H^s(\mathbb{S})\subset C^2(\mathbb{S}),\ (s\geq3)$ and write $M=M_{u}=\max\limits_{x\in \mathbb{S}}\{u(x)\}$
and $m=m_{u}=\min\limits_{x\in \mathbb{S}}\{u(x)\}$. Let $\xi$ and $\eta$ be such that $u(\xi)=M$ and $u(\eta)=m$.
Following the similar computation as in \cite{LJ}, for the real function $g(x)$ which is defined by
\begin{equation}\label{3.10}
g(x)=
\left \{
   \begin{array}{l}
      u_{x}+\sqrt{u^2-m^2},\quad \xi<x\leq\eta,\\
      u_{x}-\sqrt{u^2-m^2},\quad \eta\leq x<\xi+1,
   \end{array}
   \right.
\end{equation}
and extended periodically to the entire line, we obtain
\begin{equation}\label{3.11}
\displaystyle \int_{\mathbb{S}}g^2(x)dx=2m^2\ln(\frac{M+\sqrt{M^2-m^2}}{m})-2M\sqrt{M^2-m^2}-m^2+E_{2}(u).
\end{equation}

On the other hand, inspired by the idea in \cite{XLIU}, we define the other real function $h(x)$ by
\begin{equation}\label{3.12}
h(x)=
\left \{
   \begin{array}{l}
      \displaystyle u^2+\frac{2}{3}u_{x}\sqrt{u^2-m^2}-\frac{1}{3}u^2_{x}-m^2,\quad \xi<x\leq\eta,\\[3mm]
      \displaystyle u^2-\frac{2}{3}u_{x}\sqrt{u^2-m^2}-\frac{1}{3}u^2_{x}-m^2,\quad \eta\leq x<\xi+1,
   \end{array}
   \right.
\end{equation}
and also extend it periodically to the entire line. We compute
$$
\begin{array}{ll}
&\displaystyle\int_{\mathbb{S}}h(x)g^2(x)\\[3mm]
&=\displaystyle\int^\eta_{\xi}(u^2+\frac{2}{3}u_{x}\sqrt{u^2-m^2}-\frac{1}{3}u^2_{x}-m^2)
(u_{x}+\sqrt{u^2-m^2})^2dx\\[3mm]
&\quad+\displaystyle\int^{\xi+1}_{\eta}(u^2-\frac{2}{3}u_{x}\sqrt{u^2-m^2}-\frac{1}{3}u^2_{x}-m^2)
(u_{x}-\sqrt{u^2-m^2})^2dx\\[3mm]
&=I_{1}+I_{2}.
\end{array}
$$
A direct calculation reveals that
$$
\begin{array}{ll}
I_{1}&=\displaystyle\int^\eta_{\xi}(u^2+\frac{2}{3}u_{x}\sqrt{u^2-m^2}-\frac{1}{3}u^2_{x}-m^2)
(u^2_{x}+2u_{x}\sqrt{u^2-m^2}+u^2-m^2)dx\\[3mm]
&=\displaystyle\int^\eta_{\xi}(u^4+2u^2u^2_{x}-\frac{1}{3}u^4_{x})dx+\frac{8}{3}\int^\eta_{\xi}u^2
u_{x}\sqrt{u^2-m^2}dx\\[3mm]
&\quad\displaystyle-\frac{2}{3}m^2\int^\eta_{\xi}u_{x}\sqrt{u^2-m^2}dx-m^2\int^\eta_{\xi}(u^2+u^2_{x})dx
-m^2\int^\eta_{\xi}g^2(x)dx.
\end{array}
$$
Using the identity
$$
\frac{d}{dx}(u(u^2-m^2)^{\frac{3}{2}})=u_{x}(u^2-m^2)^{\frac{3}{2}}+3u^2u_{x}\sqrt{u^2-m^2},
$$
then it is deduced that
$$
\begin{array}{ll}
u^2u_{x}\sqrt{u^2-m^2}&=u_{x}(u^2-m^2)^{\frac{3}{2}}+m^2u_{x}\sqrt{u^2-m^2}\\[3mm]
&=\displaystyle\frac{d}{dx}(u(u^2-m^2)^{\frac{3}{2}})-3u^2u_{x}\sqrt{u^2-m^2}+m^2u_{x}\sqrt{u^2-m^2},
\end{array}
$$
and hence
$$
\begin{array}{ll}
\displaystyle\frac{8}{3}\int^\eta_{\xi}u^2u_{x}\sqrt{u^2-m^2}dx&=\displaystyle
\frac{2}{3}\int^\eta_{\xi}\frac{d}{dx}(u(u^2-m^2)^{\frac{3}{2}})dx+\frac{2}{3}m^2
\int^\eta_{\xi}u_{x}\sqrt{u^2-m^2}dx\\[3mm]
&=\displaystyle-\frac{2}{3}M(M^2-m^2)^{\frac{3}{2}}+\frac{2}{3}m^2\int^\eta_{\xi}
u_{x}\sqrt{u^2-m^2}dx.
\end{array}
$$
It follows that
\begin{equation}\label{3.13}
\begin{array}{ll}
\displaystyle I_{1}=&\displaystyle\int^\eta_{\xi}(u^4+2u^2u^2_{x}-\frac{1}{3}u^4_{x})dx
-\frac{2}{3}M(M^2-m^2)^{\frac{3}{2}}\\[3mm]
&-m^2\displaystyle\int^\eta_{\xi}(u^2+u^2_{x})dx-m^2\int^\eta_{\xi}g^2(x)dx.
\end{array}
\end{equation}
Similarly, we can also deduce
$$
\begin{array}{ll}
\displaystyle I_{2}&=\displaystyle\int^{\xi+1}_{\eta}(u^2-\frac{2}{3}u_{x}\sqrt{u^2-m^2}-
\frac{1}{3}u^2_{x}-m^2)(u^2_{x}-2u_{x}\sqrt{u^2-m^2}+u^2-m^2)dx\\[3mm]
&=\displaystyle\int^{\xi+1}_{\eta}(u^4+2u^2u^2_{x}-\frac{1}{3}u^4_{x})dx-\frac{8}{3}
\displaystyle\int^{\xi+1}_{\eta}u^2u_{x}\sqrt{u^2-m^2}dx\\[3mm]
&\displaystyle+\frac{2}{3}m^2\int^{\xi+1}_{\eta}u_{x}\sqrt{u^2-m^2}dx-m^2
\displaystyle\int^{\xi+1}_{\eta}(u^2+u^2_{x})dx-m^2\int^{\xi+1}_{\eta}g^2(x)dx\\[3mm]
&=\displaystyle\int^{\xi+1}_{\eta}(u^4+2u^2u^2_{x}-\frac{1}{3}u^4_{x})dx
-\frac{2}{3}M(M^2-m^2)^{\frac{3}{2}}\\[3mm]&-m^2
\displaystyle\int^{\xi+1}_{\eta}(u^2+u^2_{x})dx-m^2\int^{\xi+1}_{\eta}g^2(x)dx,
\end{array}
$$
which along with (\ref{3.13}), we have
\begin{equation}\label{3.14}
\displaystyle \int_{\mathbb{S}}h(x)g^2(x)dx=-\frac{4}{3}M(M^2-m^2)^{\frac{3}{2}}-m^2E_{2}(u)+E_{3}(u)
-m^2\int_{\mathbb{S}}g^2(x)dx.
\end{equation}
In view of the positivity of the function $u$, a direct use of Young's inequality for $x\in\mathbb{S}$
gives rise to
$$
\begin{array}{ll}
h(x)&=\displaystyle u^2(x)\pm\frac{2}{3}u_{x}(x)\sqrt{u^2(x)-m^2}-\frac{1}{3}u^2_{x}(x)-m^2\\[3mm]
&\displaystyle\leq u^2(x)+\frac{1}{3}(u^2(x)-m^2)-m^2\\[3mm]
&\displaystyle=\frac{4}{3}(u^2(x)-m^2)\\[3mm]
&\displaystyle\leq\frac{4}{3}(M^2-m^2),
\end{array}
$$
which together with (\ref{3.14}) leads to
\begin{equation}\label{3.15}
\begin{array}{ll}
&\displaystyle-\frac{4}{3}M(M^2-m^2)^{3/2}-m^2E_{2}(u)+E_{3}(u)-m^2\int_{\mathbb{S}}g^2(x)dx\\[3mm]
&\displaystyle\leq \frac{4}{3}(M^2-m^2)\int_{\mathbb{S}}g^2(x)dx.
\end{array}
\end{equation}
Therefore, combining (\ref{3.11}) and (\ref{3.15}), we conclude that
\begin{equation}\label{3.16}
\begin{array}{ll}
0\leq&\displaystyle(\frac{4}{3}M^2-\frac{1}{3}m^2)\bigg(2m^2\ln(\frac{M+\sqrt{M^2-m^2}}{m})\\[3mm]
&-2M\sqrt{M^2-m^2}-m^2+E_{2}(u)\bigg)\\[3mm]
&+\displaystyle\frac{4}{3}M(M^2-m^2)^{3/2}+m^2E_{2}(u)-E_{3}(u).
\end{array}
\end{equation}
\end{proof}
\begin{rem}\label{rem3.2}
It is noted that the functionals $g$ and $h$ are zero when $u$ is replaced by the periodic
peakons $\varphi_{c}$.  Moreover, the function $G_{u(t)}$ depends on $u$ only through the two
conservation laws $E_{2}(u)$  and $E_{3}(u)$.  Hence, $G_{u(t)}=G_{u}$ is independent of time.
\end{rem}

In the next lemma, we will derive some properties
of the function $G_{u}(M,m)$ associated to the peakon $\varphi_{c}$.

\begin{lemma}\label{lem3.3}
For the peakon $\varphi_{c}$, we have
$$
\begin{array}{ll}
&\displaystyle G_{\varphi_{c}}(M_{\varphi_{c}},m_{\varphi_{c}})=0, \quad \frac{\partial G_{\varphi_{c}}}{\partial M}
(M_{\varphi_{c}},m_{\varphi_{c}})=0,\quad \frac{\partial G_{\varphi_{c}}}{\partial m}
(M_{\varphi_{c}},m_{\varphi_{c}})=0\\[3mm]
&\displaystyle\frac{\partial^2 G_{\varphi_{c}}}{\partial M^2}(M_{\varphi_{c}},m_{\varphi_{c}})=-\frac{32}{3}
c\operatorname{th}(\frac{1}{2}),\quad \frac{\partial^2 G_{\varphi_{c}}}{\partial m^2}(M_{\varphi_{c}},m_{\varphi_{c}})
=-\frac{16}{3}c\operatorname{th}(\frac{1}{2}),\\[3mm] &\displaystyle\frac{\partial^2 G_{\varphi_{c}}}
{\partial M\partial m}(M_{\varphi_{c}},m_{\varphi_{c}})=0,
\end{array}
$$
where $M_{\varphi_{c}}=\max\limits_{x\in \mathbb{S}}\{\varphi_{c}(x)\}$ and $m_{\varphi_{c}}=
\min\limits_{x\in \mathbb{S}}\{\varphi_{c}(x)\}$.
\end{lemma}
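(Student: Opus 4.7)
The strategy is a direct Taylor-style calculation at the base point $(M_{\varphi_c}, m_{\varphi_c})$. Because $E_2(u)$ and $E_3(u)$ depend only on $u$ and not on the variables $(M,m)$, the function $G_u(M,m)$ is an elementary expression in $M$ and $m$ alone once we plug in $u=\varphi_c$, so the claimed identities are purely algebraic facts about Taylor coefficients. The engine of every cancellation will be the following base data: at the peakon we have
\[
M_{\varphi_c}=\sqrt{c},\qquad m_{\varphi_c}=\frac{\sqrt{c}}{\operatorname{ch}(1/2)},\qquad \sqrt{M_{\varphi_c}^2-m_{\varphi_c}^2}=\sqrt{c}\,\operatorname{th}(1/2),
\]
and the crucial identity $\operatorname{ch}(1/2)+\operatorname{sh}(1/2)=e^{1/2}$, which gives
\[
\ln\!\frac{M_{\varphi_c}+\sqrt{M_{\varphi_c}^2-m_{\varphi_c}^2}}{m_{\varphi_c}}=\ln\bigl[(1+\operatorname{th}(1/2))\operatorname{ch}(1/2)\bigr]=\tfrac{1}{2}.
\]

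My first step will be to decompose $G_u = A\cdot B + C + m^2 E_2(u)-E_3(u)$, where $A(M,m)=\tfrac{4}{3}M^2-\tfrac{1}{3}m^2$, $C(M,m)=\tfrac{4}{3}M(M^2-m^2)^{3/2}$, and $B(M,m,u)=2m^2\ln\frac{M+\sqrt{M^2-m^2}}{m}-2M\sqrt{M^2-m^2}-m^2+E_2(u)$. Substituting the closed forms $E_2(\varphi_c)=2c\operatorname{th}(1/2)$ and $E_3(\varphi_c)=2c^2\operatorname{th}(1/2)-\tfrac{2}{3}c^2\operatorname{th}^3(1/2)$ from (\ref{E1}) and (\ref{3.7}), a short direct computation shows $B|_{\varphi_c}=0$ (the $c\operatorname{th}(1/2)$ and $c/\operatorname{ch}^2(1/2)$ terms cancel in pairs), and then $C|_{\varphi_c}+m_{\varphi_c}^2 E_2(\varphi_c)-E_3(\varphi_c)=0$ after using $1/\operatorname{ch}^2(1/2)=1-\operatorname{th}^2(1/2)$. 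This gives $G_{\varphi_c}(M_{\varphi_c},m_{\varphi_c})=0$.

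For the first partials, I will compute
\[
\partial_M B = -4\sqrt{M^2-m^2},\qquad \partial_m B = 4m\ln\!\frac{M+\sqrt{M^2-m^2}}{m}-2m,
\]
the logarithmic and $1/\sqrt{M^2-m^2}$ singularities collapsing after rationalisation. At the peakon this yields $\partial_M B|_{\varphi_c}=-4\sqrt{c}\operatorname{th}(1/2)$ and, decisively, $\partial_m B|_{\varphi_c}=4m_{\varphi_c}\cdot\tfrac{1}{2}-2m_{\varphi_c}=0$. Combined with $B|_{\varphi_c}=0$, this reduces $\partial_M G|_{\varphi_c}$ to $A|_{\varphi_c}\partial_M B|_{\varphi_c}+\partial_M C|_{\varphi_c}$ and $\partial_m G|_{\varphi_c}$ to $\partial_m C|_{\varphi_c}+2m_{\varphi_c}E_2(\varphi_c)$; both collapse to $0$ by substitution.

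The second partials are where the bookkeeping becomes heaviest, and this is the main obstacle. However, the vanishing of $B$ and $\partial_m B$ at the peakon eliminates the majority of cross-terms: $\partial_m^2(AB)|_{\varphi_c}=A|_{\varphi_c}\partial_m^2 B|_{\varphi_c}$, $\partial_M\partial_m(AB)|_{\varphi_c}=\partial_m A\,\partial_M B|_{\varphi_c}+A\,\partial_M\partial_m B|_{\varphi_c}$, and similarly for $\partial_M^2(AB)$ only two terms survive. I will compute the remaining elementary derivatives $\partial_M^2 B=-4M/\sqrt{M^2-m^2}$, $\partial_m^2 B$, $\partial_M\partial_m B$, and the corresponding derivatives of $C$, evaluate each at the peakon using the base data above, and then add together the contributions. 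The expected cancellation driving $\partial_M\partial_m G|_{\varphi_c}=0$ and giving the clean values $-\tfrac{32}{3}c\operatorname{th}(1/2)$ and $-\tfrac{16}{3}c\operatorname{th}(1/2)$ will follow from the identity $1/\operatorname{ch}^2(1/2)=1-\operatorname{th}^2(1/2)$ used repeatedly to combine terms with the same power of $\operatorname{th}(1/2)$.
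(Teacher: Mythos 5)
Your proposal is correct and follows essentially the same route as the paper: your factor $B(M,m,u)$ is exactly the paper's $J_u(M,m)=\int_{\mathbb{S}}g^2\,dx$, and you use the same base data ($\sqrt{M_{\varphi_c}^2-m_{\varphi_c}^2}=\sqrt{c}\operatorname{th}(1/2)$, the logarithm equal to $1/2$) and the same derivative formulas $\partial_M B=-4\sqrt{M^2-m^2}$, $\partial_m B=4m\ln(\cdot)-2m$ to evaluate everything at the peakon. The only cosmetic difference is that you verify $B|_{\varphi_c}=0$ by direct substitution, whereas the paper observes it from $g\equiv 0$ for the peakon; the computations and conclusions coincide.
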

\begin{proof}
It is inferred from (\ref{D1}) that the function $g(x)$ corresponding to the peakon
$\varphi_{c}$ is identically zero. Hence
the inequality (\ref{3.15}) is an equality in the case of the peakon, which implies that
$G_{\varphi_{c}}(M_{\varphi_{c}},m_{\varphi_{c}})=0$.

We denote
$$
\begin{array}{ll}
J_{u}(M,m)&=\displaystyle\int_{\mathbb{S}}g^2(x)dx\\[3mm]
&=\displaystyle2m^2\ln(\frac{M+\sqrt{M^2-m^2}}{m})-2M\sqrt{M^2-m^2}-m^2+E_{2}(u).
\end{array}
$$
Then $G_{u}$ can be written as
$$
G_{u}(M,m)=(\frac{4}{3}M^2-\frac{1}{3}m^2)J_{u}+\frac{4}{3}M(M^2-m^2)^{3/2}+m^2E_{2}(u)-E_{3}(u).
$$
A straightforward computation gives
\begin{equation}\label{3.17}
\begin{array}{ll}
&\displaystyle\frac{\partial J_{u}}{\partial M}=-4\sqrt{M^2-m^2},\\[3mm]
&\displaystyle\frac{\partial J_{u}}{\partial m}=4m \ln(\frac{M+\sqrt{M^2-m^2}}{m})-2m,\\[3mm]
&\displaystyle\frac{\partial^2 J_{u}}{\partial m^2}=4 \ln(\frac{M+\sqrt{M^2-m^2}}{m})
-\frac{4M}{\sqrt{M^2-m^2}}-2.
\end{array}
\end{equation}
A further differentiation gives
$$
\begin{array}{ll}
\displaystyle\frac{\partial G_{u}}{\partial M}&=\displaystyle\frac{8}{3}MG_{u}+(\frac{4}{3}
M^2-\frac{1}{3}m^2)\frac{\partial J_{u}}{\partial M}+\frac{4}{3}(M^2-m^2)^{3/2}+4M^2
\sqrt{M^2-m^2}\\[3mm]
&=\displaystyle\frac{8}{3}MG_{u}-4\sqrt{M^2-m^2}(\frac{4}{3}M^2-\frac{1}{3}m^2)+\sqrt{M^2-m^2}
(\frac{16}{3}M^2-\frac{4}{3}m^2)\\[3mm]
&=\displaystyle\frac{8}{3}MJ_{u},\\[3mm]
\displaystyle\frac{\partial G_{u}}{\partial m}
&=\displaystyle-\frac{2}{3}mJ_{u}+(\frac{4}{3}M^2-\frac{1}{3}m^2)\frac{\partial J_{u}}
{\partial m}-4Mm\sqrt{M^2-m^2}+2mE_{2}(u)
\end{array}
$$
and
$$
\begin{array}{ll}
&\displaystyle\frac{\partial^2 G_{u}}{\partial M^2}=\frac{8}{3}J_{u}+\frac{8}{3}M
\frac{\partial J_{u}}{\partial M},\\[3mm]
&\displaystyle\frac{\partial^2 G_{u}}{\partial m^2}=-\frac{2}{3}J_{u}-\frac{4}{3}m
\frac{\partial J_{u}}{\partial m}+(\frac{4}{3}M^2-\frac{1}{3}m^2)
\frac{\partial^2 J_{u}}{\partial m^2}+\frac{8Mm^2-4M^3}{\sqrt{M^2-m^2}}+2E_{2}(u),\\[3mm]
&\displaystyle\frac{\partial^2 G_{u}}{\partial M\partial m}=\frac{8}{3}M
\frac{\partial J_{u}}{\partial m}.
\end{array}
$$
Recall that for the peakon $\varphi_{c}$,
\begin{equation}\label{3.18}
M_{\varphi_{c}}=\sqrt{c},\quad m_{\varphi_{c}}=\frac{\sqrt{c}}{\operatorname{ch}(\frac{1}{2})},\quad
E_{2}(\varphi_{c})=2c\operatorname{th}(\frac{1}{2}).
\end{equation}
It follows that
\begin{equation}\label{3.19}
\sqrt{M^2_{\varphi_{c}}-m^2_{\varphi_{c}}}=\sqrt{c}\operatorname{th}(\frac{1}{2}),
\quad and \quad \ln(\frac{M_{\varphi_{c}}
+\sqrt{M^2_{\varphi_{c}}-m^2_{\varphi_{c}}}}{m_{\varphi_{c}}})=\frac{1}{2},
\end{equation}
where the identity $\operatorname{sh}(x)+\operatorname{ch}(x)=e^{x}$ is used.
Substituting (\ref{3.18}) and (\ref{3.19})
into (\ref{3.17}), we have
\begin{equation}\label{3.20}
\begin{array}{ll}
\displaystyle\frac{\partial J_{\varphi_{c}}}{\partial M}(M_{\varphi_{c}},m_{\varphi_{c}})
=-4\sqrt{c}\operatorname{th}(\frac{1}{2}), \quad \frac{\partial J_{\varphi_{c}}}
{\partial m}(M_{\varphi_{c}},m_{\varphi_{c}})=0,\\[3mm]
\displaystyle\frac{\partial^2 J_{\varphi_{c}}}{\partial m^2}(M_{\varphi_{c}},m_{\varphi_{c}})
=-\frac{4}{\operatorname{th}(\frac{1}{2})}.
\end{array}
\end{equation}

On the other hand, related to the peakon, $J_{\varphi_{c}}(M_{\varphi_{c}},m_{\varphi_{c}})=0$, and hence, to complete
the proof of the lemma, it suffices to take $G_{u}
=G_{\varphi_{c}}$, $M=M_{\varphi_{c}}$ and $m=m_{\varphi_{c}}$ in the expressions for the partial derivatives of $G$ and
use (\ref{3.18}), (\ref{3.19}) and (\ref{3.20}).
\end{proof}
\begin{lemma}\label{lem3.4}\cite{LJ}
It holds that
\begin{equation}\label{3.21}
\max\limits_{x\in \mathbb{S}}|v(x)|\leq\sqrt{\frac{\operatorname{ch}(\frac{1}{2})}{2\operatorname{sh}(\frac{1}{2})}}
\|v\|_{H^1(\mathbb{S})},\quad v\in H^1(\mathbb{S}).
\end{equation}
Moreover, $\sqrt{\frac{\operatorname{ch}(\frac{1}{2})}{2\operatorname{sh}(\frac{1}{2})}}$ is the
best constant, and equality holds if and only if $v=\varphi_{c}(\cdot-\xi)$ for some $c>0$ and $\xi\in \mathbb{R}$,
that is, if and only if $v$ has the shape of a peakon.
\end{lemma}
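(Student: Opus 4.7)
The plan is to exploit the Green's function of $1-\partial_x^2$ on the circle together with a vector-valued Cauchy--Schwarz inequality. Recall from Section~2 that
$$
G(x)=\frac{\operatorname{ch}(\tfrac{1}{2}-x+[x])}{2\operatorname{sh}(\tfrac{1}{2})}
$$
is the fundamental solution, satisfying $G-G_{xx}=\delta$ in the sense of periodic distributions, so that $v=G*(v-v_{xx})$ for any $v\in H^2(\mathbb{S})$. Integrating by parts once in the term carrying $v_{xx}$ (periodicity kills the boundary contributions) yields the pointwise representation
$$
v(x)=\int_{\mathbb{S}}\bigl[G(x-y)\,v(y)+G_y(x-y)\,v_y(y)\bigr]\,dy,
$$
which extends by density to every $v\in H^1(\mathbb{S})$.

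Viewing the integrand as the Euclidean inner product of the pair $\bigl(G(x-\cdot),G_y(x-\cdot)\bigr)$ against $(v,v_y)$ and applying Cauchy--Schwarz in $L^2(\mathbb{S})\oplus L^2(\mathbb{S})$ gives
$$
|v(x)|^2\le\bigl(\|G\|_{L^2(\mathbb{S})}^2+\|G_y\|_{L^2(\mathbb{S})}^2\bigr)\,\|v\|_{H^1(\mathbb{S})}^2=\|G\|_{H^1(\mathbb{S})}^2\,\|v\|_{H^1(\mathbb{S})}^2.
$$
The sharp constant is then identified by one further integration by parts:
$$
\|G\|_{H^1(\mathbb{S})}^2=\int_{\mathbb{S}}G\bigl(G-G_{xx}\bigr)\,dx=G(0)=\frac{\operatorname{ch}(\tfrac{1}{2})}{2\operatorname{sh}(\tfrac{1}{2})},
$$
and taking the supremum over $x\in\mathbb{S}$ completes the proof of the stated inequality.

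For the equality case, Cauchy--Schwarz is saturated if and only if the pair $(v(y),v_y(y))$ is almost everywhere proportional to $(G(x-y),G_y(x-y))$. Since the second component is automatically the $y$-derivative of the first, this collapses to the single relation $v(y)=\lambda\,G(x-y)$ for some $\lambda\in\mathbb{R}$, where $x$ is the point at which $|v|$ attains its maximum. Using the identification $\varphi(z)=2\operatorname{th}(\tfrac{1}{2})\,G(z)$, such a $v$ is precisely a (possibly signed) multiple of a shifted peakon $\varphi_c(\cdot-\xi)$, with $\lambda$ matching the wave speed $c>0$ through $\lambda=2\sqrt{c}\,\operatorname{th}(\tfrac{1}{2})$ up to sign.

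The main obstacle is the equality analysis: one must translate the Cauchy--Schwarz saturation condition back into the geometric statement that $v$ has the functional form of a (shifted, signed) peakon, which requires carefully matching $G$ with $\varphi$ and allowing for anti-peakons. A subsidiary technicality is the density argument that justifies the integral representation for merely $H^1$ data rather than $H^2$; this is handled by mollification since both sides depend continuously on $v$ in the $H^1$ topology.
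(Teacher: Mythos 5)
The paper offers no proof of this lemma at all --- it is imported from \cite{LJ} with only the citation --- so there is nothing internal to compare against; the question is simply whether your argument stands on its own, and it does. The identity $v(x)=\int_{\mathbb{S}}\bigl[G(x-y)v(y)+\partial_y\bigl(G(x-y)\bigr)v_y(y)\bigr]dy$ is exactly the statement that $G(x-\cdot)$ is the reproducing kernel of $H^1(\mathbb{S})$; your Cauchy--Schwarz step and the evaluation $\|G\|^2_{H^1}=G(0)=\frac{\operatorname{ch}(1/2)}{2\operatorname{sh}(1/2)}$ are correct, the latter because the jump of $G_x$ across $x=0$ contributes precisely $G(0)$ in the integration by parts (equivalently, $\langle G, G-G_{xx}\rangle=\langle G,\delta\rangle=G(0)$). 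The equality analysis is also right: saturation forces $(v,v_y)=\lambda\bigl(G(x_0-\cdot),\partial_yG(x_0-\cdot)\bigr)$ in $L^2\oplus L^2$ with $x_0$ a maximizer of $|v|$, the second component is redundant, and $G(x_0-\cdot)=\frac{1}{2\operatorname{th}(1/2)}\varphi(\cdot-x_0)$ by evenness of $\varphi$, so $v$ is a signed multiple of a shifted peakon. Your observation that $\lambda<0$ (anti-peakons) and $\lambda=0$ also give equality is correct and in fact makes your statement more accurate than the lemma's ``if and only if $v=\varphi_c(\cdot-\xi)$, $c>0$''; the density step is harmless since $H^1(\mathbb{S})\hookrightarrow C(\mathbb{S})$ makes both sides of the representation $H^1$-continuous. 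For context, the cited source establishes the same bound by a direct completing-the-square computation with hyperbolic weights around the maximum point (the same device this paper uses in the proof of Lemma 3.2 via the function $g$); your kernel formulation is equivalent and, if anything, cleaner, since the sharp constant and the equality case drop out simultaneously.
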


\begin{rem}\label{rem3.3}
The above Lemma again indicates that among all waves of fixed energy, the peakon has maximal height. Moreover,
the inequality (\ref{3.21}) implies that the map $v\in H^1(\mathbb{R})\mapsto\max\limits_{x\in\mathbb{S}}\{v(x)\}\in
\mathbb{R}$ is continuous.
\end{rem}
\begin{lemma}\label{lem3.5}\cite{LJ}
If $u\in C([0, T), H^1(\mathbb{S}))$, then
$$
M_{u(t)}=\max\limits_{x\in\mathbb{S}}\{u(x,t)\}\quad and \quad m_{u(t)}=\min\limits_{x\in\mathbb{S}}\{u(x,t)\}
$$
are continuous for $t\in [0, \infty)$.
\end{lemma}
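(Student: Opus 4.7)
My plan is to combine the Sobolev-type inequality from Lemma \ref{lem3.4} with the elementary fact that the functionals $\max$ and $\min$ over $\mathbb{S}$ are $1$-Lipschitz with respect to the $L^{\infty}$ norm. Fix $t_{0}\in[0,T)$. For any $t\in[0,T)$ and any $x\in\mathbb{S}$, the pointwise bound
\[
u(x,t_{0})-\|u(\cdot,t)-u(\cdot,t_{0})\|_{L^{\infty}(\mathbb{S})}\leq u(x,t)\leq u(x,t_{0})+\|u(\cdot,t)-u(\cdot,t_{0})\|_{L^{\infty}(\mathbb{S})}
\]
holds by the definition of the $L^{\infty}$ norm. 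Taking the supremum over $x$ in all three expressions yields
\[
|M_{u(t)}-M_{u(t_{0})}|\leq\|u(\cdot,t)-u(\cdot,t_{0})\|_{L^{\infty}(\mathbb{S})},
\]
and taking the infimum yields the analogous bound for $|m_{u(t)}-m_{u(t_{0})}|$.

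The second step is to upgrade $H^{1}$-continuity of $t\mapsto u(\cdot,t)$ to $L^{\infty}$-continuity. Applying Lemma \ref{lem3.4} to $v=u(\cdot,t)-u(\cdot,t_{0})\in H^{1}(\mathbb{S})$ gives
\[
\|u(\cdot,t)-u(\cdot,t_{0})\|_{L^{\infty}(\mathbb{S})}\leq\sqrt{\frac{\operatorname{ch}(1/2)}{2\operatorname{sh}(1/2)}}\,\|u(\cdot,t)-u(\cdot,t_{0})\|_{H^{1}(\mathbb{S})},
\]
and the right-hand side tends to $0$ as $t\to t_{0}$ by the hypothesis $u\in C([0,T),H^{1}(\mathbb{S}))$. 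Combining the two steps proves that both $M_{u(t)}$ and $m_{u(t)}$ are continuous at $t_{0}$, and since $t_{0}\in[0,T)$ is arbitrary the conclusion follows.

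The argument is entirely soft and I do not expect any real obstacle; the only ingredient beyond the embedding is the observation (also recorded in Remark \ref{rem3.3}) that the map $v\mapsto\max_{x\in\mathbb{S}}v(x)$ is continuous from $H^{1}(\mathbb{S})$ to $\mathbb{R}$. In fact the sharp constant of Lemma \ref{lem3.4} plays no role here: any Sobolev embedding of the form $H^{1}(\mathbb{S})\hookrightarrow L^{\infty}(\mathbb{S})$ would suffice, since what is really used is merely the existence of \emph{some} constant $C$ with $\|v\|_{L^{\infty}(\mathbb{S})}\leq C\|v\|_{H^{1}(\mathbb{S})}$.
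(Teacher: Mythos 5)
Your proposal is correct and follows essentially the same route as the paper: the paper also bounds $|M_{u(t)}-M_{u(s)}|\leq\max_{x\in\mathbb{S}}|u(x,t)-u(x,s)|$ and then invokes the embedding of Lemma \ref{lem3.4} together with $H^1$-continuity in time. Your added remark that only some constant $C$ (not the sharp one) is needed is accurate but does not change the argument.
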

In fact,  for $t, s\in[0, \infty)$,
$$
\begin{array}{ll}
|M_{u(t)}-M_{u(s)}|&=|\max\limits_{x\in\mathbb{S}}u(x,t)-\max\limits_{x\in\mathbb{S}}u(x,s) |\\[3mm]
&\leq \max\limits_{x\in\mathbb{S}}|u(x,t)-u(x,s)|\\[3mm]
&\leq\sqrt{\frac{\operatorname{ch}(\frac{1}{2})}{2\operatorname{sh}(\frac{1}{2})}}\|u(\cdot, t)-u(\cdot, s)\|_{H^1(\mathbb{S})}.
\end{array}
$$
The continuity of $m_{u(t)}$ is similar.

\begin{lemma}\label{lem3.7}
Let $u\in C([0, \infty), H^s(\mathbb{S})), \ s\geq3$,  be a global periodic weak solution of the Novikov equation 
with the initial data $u_{0}$. Given a small neighborhood $\mathcal{U}$ of $(M_{\varphi_{c}},m_{\varphi_{c}})$
in $\mathbb{R}^2$, there is a $\delta>0$ such that for $t\in [0, \infty)$
\begin{equation}\label{3.30}
(M_{u(t)},m_{u(t)})\in\mathcal{U}, \quad if \quad\|u(\cdot,0)-\varphi_{c}\|_{H^1(\mathbb{S})}<\delta.
\end{equation}
\end{lemma}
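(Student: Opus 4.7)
The plan is to exploit that $G_u$ depends on $u$ only through the two conservation laws $E_2(u)$ and $E_3(u)$, so (by Remark \ref{rem3.2}) $G_{u(t)}\equiv G_{u_0}$ is constant in time. The strategy is then a standard trapping argument: use the local geometry of $G_{\varphi_c}$ near $(M_{\varphi_c},m_{\varphi_c})$ together with the sign constraint of Lemma \ref{lem3.2} to confine the trajectory $t\mapsto (M_{u(t)},m_{u(t)})$.

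First, I would extract the local structure of $G_{\varphi_c}$. By Lemma \ref{lem3.3}, $(M_{\varphi_c},m_{\varphi_c})$ is a critical point of $G_{\varphi_c}$ at which $G_{\varphi_c}$ vanishes and whose Hessian is the negative-definite diagonal matrix with entries $-\tfrac{32}{3}c\operatorname{th}(\tfrac12)$ and $-\tfrac{16}{3}c\operatorname{th}(\tfrac12)$. Shrinking $\mathcal{U}$ if necessary, Taylor's theorem therefore yields a neighborhood $\mathcal{W}$ with $\overline{\mathcal{W}}\subset\mathcal{U}$ and a constant $\eta>0$ such that
$$
G_{\varphi_c}(M,m)\leq -\eta\quad\text{for all } (M,m)\in\partial\mathcal{W}.
$$

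Next, I would use continuity of the conserved functionals at $\varphi_c$ in the $H^1$-topology. Since $G_u-G_{\varphi_c}$ is an affine function of $(E_2(u)-E_2(\varphi_c),\,E_3(u)-E_3(\varphi_c))$ whose coefficients are continuous functions of $(M,m)$ and therefore bounded on $\overline{\mathcal{W}}$, it suffices to ensure that $|E_j(u_0)-E_j(\varphi_c)|$ ($j=2,3$) are small. For $E_2$ this is immediate; for $E_3$ one expands $u_0^4-\varphi_c^4$, $u_0^2u_{0,x}^2-\varphi_c^2(\varphi_c)_x^2$ and $u_{0,x}^4-(\varphi_c)_x^4$ and estimates each using the $L^\infty$-bound on $\varphi_c'$, the Sobolev embedding $H^1\hookrightarrow L^\infty$ (which gives a uniform $L^\infty$ bound on $u_0$), and the $W^{1,\infty}$-regularity coming from $u_0\in H^s$, $s\geq 3$. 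Consequently, for $\delta$ small enough,
$$
\sup_{(M,m)\in\overline{\mathcal{W}}}|G_{u_0}(M,m)-G_{\varphi_c}(M,m)|<\frac{\eta}{2}.
$$
Simultaneously, by Lemma \ref{lem3.4}, the map $u\mapsto(M_u,m_u)$ is $H^1$-continuous, so for $\delta$ small enough we also have $(M_{u_0},m_{u_0})\in\mathcal{W}$.

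Finally, I would close the argument by a connectedness/continuity trapping. By conservation $G_{u_0}=G_{u(t)}$ together with Lemma \ref{lem3.2} applied to $u(t)$,
$$
G_{u_0}(M_{u(t)},m_{u(t)})\geq 0\quad\text{for all }t\geq 0,
$$
while $G_{u_0}\leq -\eta/2<0$ on $\partial\mathcal{W}$. Since $t\mapsto (M_{u(t)},m_{u(t)})$ is continuous by Lemma \ref{lem3.5} and starts inside $\mathcal{W}$, it cannot cross $\partial\mathcal{W}$, hence remains in $\mathcal{W}\subset\mathcal{U}$ for all $t\geq 0$. The main obstacle I anticipate is the continuity of $E_3$ at $\varphi_c$, since $E_3$ contains the term $\int u_x^4\,dx$ which is not controlled by the $H^1$-norm alone; overcoming this requires combining the pointwise smallness of $u_0-\varphi_c$ (via Sobolev embedding) with the $L^\infty$-regularity of $\varphi_c'$ and the higher regularity of $u_0$ to factor and estimate the difference $u_{0,x}^4-(\varphi_c)_x^4$.
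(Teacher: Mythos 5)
Your argument is correct and follows essentially the same route as the paper's proof: time-independence of $G_{u}$ via the conservation laws, the vanishing critical point with negative-definite Hessian from Lemma \ref{lem3.3} yielding a strictly negative barrier for $G_{\varphi_{c}}$ on the boundary of a small neighborhood, smallness of the affine perturbation $G_{u_{0}}-G_{\varphi_{c}}$ through continuity of $E_{2},E_{3}$, and the continuity-plus-connectedness trapping of $(M_{u(t)},m_{u(t)})$ using Lemmas \ref{lem3.2}, \ref{lem3.4} and \ref{lem3.5}. The one refinement worth recording is that the uniform control of $\int_{\mathbb{S}} u_{0,x}^{4}\,dx$ (hence of $E_{3}$) over the $H^{1}$-ball should come from the sign condition $y_{0}\geq0$, which gives $|u_{0,x}|\leq u_{0}\leq C\|u_{0}\|_{H^{1}(\mathbb{S})}$, rather than from mere membership $u_{0}\in H^{s}(\mathbb{S})$, since the $W^{1,\infty}$-norms of such $u_{0}$ are not uniformly bounded on that ball.
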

\begin{proof}
The function $G_{u(t)}$ in Lemma\ref{lem3.2} depends on $u$ only through the two
conservation laws $E_{2}(u)$  and $E_{3}(u)$.  Hence, $G_{u(t)}=G_{u}$ is independent of time. 
It follows from Lemma \ref{lem3.3} that the function $G_{\varphi_{c}}(M, m)$ has a critical point with negative definite Hessian at 
$(M_{\varphi}, m_{\varphi})$ and $G_{\varphi_{c}}(M_{\varphi_{c}}, m_{\varphi_{c}})=0$.  Since $G_{\varphi_{c}}(M, m)$ and 
its Hessian are continuous, there is a neighborhood $\Gamma\subset D\subset\mathbb{R}^2$ around $(M_{\varphi_{c}}, 
m_{\varphi_{c}})$, where $D$ is 
the domain of the function $G_{u}(M, m)$ defined in Lemma\ref{lem3.2}, such that $G_{\varphi_{c}}$ is concave downward with 
curvature bounded away from zero. This property along with the boundary values of $G_{\varphi_{c}}$  implies that
$$
G_{\varphi_{c}}(M, m)\ne l<0,\quad  on \quad D/ \Gamma,
$$
where the constant $l>0$ depending on $\Gamma$.

Suppose $\omega\in H^s(\mathbb{S}), \ s\geq3$ is a small perturbation of $\varphi_{c}$ in the $H^1(\mathbb{S})$-norm such 
that $E_{i}[w]=E_{i}[\varphi_{c}]+\epsilon_{i}, i=2,3$. $E_{i}(u)=E_{i}(\varphi_{c})+\epsilon_{i}, \ i=2,3$. Then
$$
G_{u}(M,m)=G_{\varphi_{c}}(M,m)+(\frac{4}{3}M^2+\frac{2}{3}m^2)\epsilon_{1}-\epsilon_{2}.
$$
So $G_{\omega}$ is a small perturbation of $G_{\varphi_{c}}$. Notice that the effect of the perturbation near the point
$(M_{\varphi_{c}},m_{\varphi_{c}})$ can be made arbitrarily small by choosing $\epsilon_{1}$ and $\epsilon_{2}$
small, that is,
$$
G_{\omega}(M, m)=G_{\varphi_{c}}(M,m)+\mathcal{O}(\epsilon_{i}).
$$
Therefore, the set where $G_{\omega}\geq0$ near $(M_{\varphi_{c}},m_{\varphi_{c}})$ will be contained in a neighborhood
of $(M_{\varphi_{c}},m_{\varphi_{c}})$ provided that $\epsilon_{i}$ is small enough.

Now, let $\mathcal{U}$ be given as in the statement of the lemma. Shrinking $\mathcal{U}$ if necessary,
we infer that there exist a $\delta'>0$ such that for $u\in C\left([0, \infty), H^s(\mathbb{S})\right),\ s\geq3$ with
\begin{equation}\label{3.31}
|E_{2}(u)-E_{2}(\varphi_{c})|\leq \delta' \quad and \quad |E_{3}(u)-E_{3}(\varphi_{c})|\leq \delta',
\end{equation}
it holds that the set where $G_{u(t)}\geq0$ near $(M_{\varphi_{c}},m_{\varphi_{c}})$ is contained in $\mathcal{U}$
for each $t\in[0,+\infty)$. Lemma\ref{lem3.2} and \ref{lem3.5} suggest that $M_{u(t)}$ and $m_{u(t)}$ are continuous
functions in $t\in[0, \infty)$ and $G_{u(t)}(M_{u(t)},m_{u(t)})\geq0$ for $t\in[0, \infty)$. We deduce that for
$u$ satisfying (\ref{3.31}), if $(M_{u(0)},m_{u(0)})\in \mathcal{U}$, then
$$
(M_{u(t)},m_{u(t)})\in \mathcal{U}, \quad for\ t\in[0, \infty).
$$
However, the continuity of the conserved functionals $E_{i}:H^s(\mathbb{S})\mapsto\mathbb{R}, i=2,3$, 
shows that there is a $\delta>0$ such that (\ref{3.31}) holds for all
$u$ with
$$
\|u(\cdot,0)-\varphi_{c}\|_{H^1(\mathbb{S})}<\delta.
$$
Moreover, using the inequality (\ref{3.21}), choosing a smaller $\delta$ if necessary, we may also assume that
$(M_{u(0)},m_{u(0)})\in \mathcal{U}$ if $\|u(\cdot,0)-\varphi_{c}\|_{H^1(\mathbb{S})}<\delta$. Therefore, we end the
proof of the lemma.
\end{proof}
\paragraph{Proof of Theorem \ref{thSt}}
Let  $u\in C([0, \infty), H^s(\mathbb{S})), \ s\geq3$ be a periodic solution of the Novikov equation with initial $u_{0}$
and $\varepsilon>0$ be arbitrary. Take a neighborhood $\mathcal{U}$ of $(M_{\varphi_{c}},m_{\varphi_{c}})$ small enough
such that
$$
|M-M_{\varphi_{c}}|<\frac{\varepsilon^2}{8\sqrt{c}\operatorname{th}(\frac{1}{2})}, \quad if \quad (M,m)\in\mathcal{U}.
$$
Choose a $\delta>0$ as in Lemma\ref{lem3.7} so that for $t\in[0, \infty)$ (\ref{3.30}) holds. Taking a smaller $\delta$
if necessary we may also assume that
$$
|E_{2}(u)-E_{2}(\varphi_{c})|<\frac{\varepsilon^2}{2}, \quad if \quad \|u(\cdot,0)-\varphi_{c}\|_{H^1(\mathbb{S})}<\delta.
$$
We conclude after using Lemma\ref{lem3.1} that for $t\in[0,+\infty)$,
$$
\begin{array}{ll}
\|u(\cdot,t)-\varphi_{c}(\cdot-\xi(t))\|^2_{H^1(\mathbb{S})}&=E_{2}[u]-E_{2}[\varphi_{c}]-4\sqrt{c}\operatorname{th}(\frac{1}{2})
(u(\xi(t),t)-M_{\varphi_{c}})\\[3mm]
&\leq|E_{2}[u]-E_{2}[\varphi_{c}]|+4\sqrt{c}\operatorname{th}(\frac{1}{2})|M_{u(t)}-M_{\varphi_{c}}|\\[3mm]
&<\varepsilon^2.
\end{array}
$$
where $\xi(t)\in \mathbb{R}$ is any point where $u(\xi(t),t)=M_{u(t)}$. This completes the proof of Theorem\ref{thSt}.

\end{document}